\documentclass[11pt]{amsart}
\usepackage{url}
\usepackage{graphicx}
\usepackage{amssymb, amscd}
\usepackage{xy}
\usepackage{bm}
\xyoption{all}
\usepackage[linktocpage]{hyperref}
\usepackage[dvipsnames]{xcolor}
\usepackage{outline}
\usepackage{epstopdf}
\usepackage{rotating}
\DeclareGraphicsRule{.tif}{png}{.png}{`convert #1 `dirname #1`/`basename #1 .tif`.png}

\usepackage{fancyhdr}
\usepackage[left=1in,top=1in,right=1in,bottom=1in,head=.2in]{geometry}
\usepackage{mathabx}

\pagestyle{fancy}
\fancyhf{}
\fancyhead[CO]{\small\textsc{Knotted surfaces in $4$-manifolds by Knot surgery and Stabilization}}
\fancyhead[CE]{\small\textsc{Hee Jung Kim}}
\cfoot{\ \vskip.01in $_{\thepage}$}



\usepackage{pinlabel}
%
%

\marginparwidth=60pt
\marginparsep=8pt






\newcommand{\fcs}[2]{\,_{#1}\hskip-2.5pt\cs_{#2}}

\renewcommand{\phi}{\varphi}

\newcommand{\la}{\langle}
\newcommand{\ra}{\rangle}



\newcommand{\Z}{\mathbb Z}

\newcommand{\R}{\mathbb R}



\def\spinc{\ifmmode{\textrm{Spin}^c}\else{$\textrm{Spin}^c$}\fi}

\newtheorem{theorem}{Theorem}[section]
\newtheorem{thm}{Theorem}

\newtheorem{lemma}[theorem]{Lemma}

\newtheorem{proposition}[theorem]{Proposition}

\theoremstyle{definition}
\newtheorem{definition}[theorem]{Definition}
\newtheorem{remark}[theorem]{Remark}
\newtheorem{question}[theorem]{Question}


\newcommand{\fig}[3]{\begin{figure}[h!] \includegraphics[height=#1pt]{#2}#3\end{figure}}

\newcommand{\figref}[1]{Figure~\ref{F:#1}}
\newcommand{\secref}[1]{Section~\ref{S:#1}}
\newcommand{\thmref}[1]{Theorem~\ref{T:#1}}
\newcommand{\lemref}[1]{Lemma~\ref{L:#1}}

\newcommand{\bc}{\mathbb C}

\newcommand{\sss}{{S^2\hskip-2pt\times\hskip-2pt S^2}}
\newcommand{\sst}{S^2\widetilde{\times}S^2}
\newcommand{\cptwo}{\bc\textup{P}^2}
\newcommand{\cpone}{\bc\textup{P}^1}

\newcommand{\cs}{\mathbin{\#}} 


\newcommand{\p}{\partial}



\title{Knotted surfaces in $4$-manifolds by Knot surgery and Stabilization}

\author[Hee Jung Kim]{Hee Jung Kim}
\address{Department of Mathematical Sciences\newline\indent
Seoul National University\newline\indent
Seoul 790-784, Korea}
\email{heejungorama@gmail.com}

\thanks{Supported by NRF grant 2015R1D1A1A01059318 and BK21 PLUS SNU Mathematical Sciences Division.
\\ \indent Math.~Subj.~Class.~2010: 57M25 (primary), 57Q60 (secondary)
}

\parskip 3pt

\begin{document}
\begin{abstract} Given a simply-connected closed $4$-manifold $X$ and a smoothly embedded oriented surface $\Sigma$, various constructions based on Fintushel-Stern knot surgery have produced new surfaces in $X$ that are pairwise homeomorphic to $\Sigma$, but not diffeomorphic. We prove that for all known examples of surface knots constructed from knot surgery operations that preserve the fundamental group of the complement of surface knots, they become pairwise diffeomorphic after stabilizing by connected summing with one $\sst$. When $X$ is spin, we show in addition that any surfaces obtained by a knot surgery whose complements have cyclic fundamental group become pairwise diffeomorphic after one stabilization by $\sst$.
\end{abstract}
\maketitle

\vskip-.3in
\vskip-.3in


\section{Introduction}\label{I:intro}

Let $X$ be a smooth closed $4$-manifold and $\Sigma$ be a smoothly embedded surface. An `exotic embedding' of a  surface $\Sigma$ in $X$ is a smooth embedding in $X$ that is pairwise homeomorphic to $\Sigma$, but not diffeomorphic.  
The `stabilization' of given pair $(X,\Sigma)$ is the process of connected summing with a standard manifold pair $(\sss,\emptyset)$ or $(\sst,\emptyset)$, where $\sst$ denotes the non-trivial $S^2$ bundle over $S^2$. 

The recent work~\cite{auckly-kim-melvin-ruberman} of Auckly, Melvin, Ruberman, and the author has constructed the first examples of exotic $2$-spheres in closed simply-connected $4$-manifolds that become pairwise smoothly isotopic after `single' stabilization by $(\sss,\emptyset)$. In this context, one can ask if this stabilization phenomenon arises to exotic surfaces with higher genus.

While a great deal of exotic embeddings in $4$-manifolds are known through various constructions~\cite{finashin-kreck-viro, finashin, fs:rim, kim:twistrim, kim-danny:topotriviality, kim-danny:symsurface, kim-danny:dpsurgery}, interestingly most examples of exotic embeddings for oriented surfaces in simply-connected $4$-manifolds derive from the constructions based on `knot surgery' of Fintushel-Stern~\cite{fs:knotsurgery}. Knot surgery using a knot $K$ in $S^3$ is the operation of removing a neighborhood of a torus $T$ and replacing it by a product of $S^1$ and the exterior of the knot $K$.
Fintushel and Stern provided an effective way to detect the change of diffeomorphism type for knot surgery, showing that the Alexander polynomial of $K$ is reflected in the Seiberg-Witten invariant for a knot surgered $4$-manifold. This allows one to quickly construct and detect infinite families of exotic smooth structures on a large class of $4$-manifolds. Likewise, knot surgery can be used to change a smooth structure of smoothly embedded surface in a $4$-manifold. This approach relies on `ambient surgery' whereby a given surface $\Sigma$ is surgered to a new surface $\Sigma_K(\varphi)$, leaving the ambient manifold $X$ fixed.
The rim surgery of Fintushel-Stern~\cite{fs:rim}, author's twist rim surgery~\cite{kim:twistrim}, and Finashin's annulus rim surgery~\cite{finashin} are examples of this technique, underlying most examples of smoothly knotted oriented surfaces in a simply-connected closed $4$-manifold.

In the direction of the study of stabilization for exotic smooth structures, Auckly~\cite{auckly:stable} for $\sst$ and Akbulut~\cite{akbulut:fs-knot-surgery} for $\sss$ proved that a simply-connected $4$-manifold $X$ and its knot surgered manifold $X_K(\varphi)$ become diffeomorphic after single stabilization by $\sst$ or $\sss$, referred to as {\em $1$-stably equivalent} with the terminology in~\cite{auckly-kim-melvin-ruberman}; see~\cite{baykur-sunukjian:round} for the alternative proof.

This paper investigates the analogous stabilization question for knotted surfaces produced by all of the known constructions based on knot surgery i.e. rim surgery, twist rim surgery, and annulus rim surgery.

The Wall's stable $h$-cobordism theorem~\cite{wall:4-manifolds} states that homotopy equivalent, simply-connected $4$-manifolds become diffeomorphic after stabilization by some finite number of $\sss$ or $\sst$. It also holds for embedded surfaces (up to diffeomorphism of pairs) with simply connected complements in a $4$-manifold that represent the same homology class~\cite{quinn:stabtopology}. And, in fact, all known examples need only one stabilization to be diffeomorphic. 
So, the stabilization question for a knot surgered pair $(X,\Sigma_K(\varphi))$ would be the following. In this paper, we will use the terminology `surface knot group' for the fundamental group of surface complement in a $4$-manifold.
\begin{question}
Suppose that $X$ is simply connected and $\Sigma$ is an oriented smoothly embedded surface. Let $(X,\Sigma_{K}(\varphi))$ be a pair obtained by a knot surgery from $(X,\Sigma)$. If $\Sigma_{K}(\varphi)$ and $\Sigma$ have the same surface knot group in $X$ then are they $1$-stably equivalent?
\end{question}
This paper answers this question affirmatively for all of the currently known constructions. The precise statements are given in~\secref{mainthm} (Theorems~\ref{T:mainthmA},~\ref{T:mainthmB},~\ref{T:mainthmC}) after we discuss the known techniques for constructing exotic surfaces. 

Rim surgery of Fintushel and Stern~\cite{fs:rim} constructed an infinite family of exotic smooth embedding for surfaces with simply-connected complements in a simply-connected $4$-manifold. Finashin used annulus rim surgery~\cite{finashin} for knotting algebraic curves in $\cptwo$, and produced surfaces that are smoothly not isotopic to algebraic curves for degree $d\ge 5$, but the topological classification of his examples was open. The later work~\cite{kim:twistrim} of the author introduced a method, called twist rim surgery, of knotting surfaces that produced exotic embeddings for surfaces with cyclic knot groups in a simply-connected $4$-manifold. Applied to algebraic curves in $\cptwo$, the twist rim surgery leads to the construction of infinitely many exotic smooth structures on algebraic curves of degree $d\ge 3$. For degrees $1$ and $2$, the surfaces are spheres, and it is not easy to distinguish these by Seiberg-Witten invariants. The work of Ruberman and author~\cite{kim-danny:topotriviality} strengthened the criterion from~\cite{kim:twistrim} for topological equivalence of surfaces by showing that any surfaces produced by a knot surgery that preserve a cyclic knot group is topologically standard. As a consequence, we deduced that Finashin's examples are topologically standard. Despite some results about the existence of symplectic, noncomplex surfaces as well as smooth surfaces without symplectic structures, the main classical source of examples for smooth embeddings codimension $2$ had been complex curves. A subsequent work~\cite{kim-danny:symsurface} extended Gompf's theorem about the fundamental group of symplectic manifolds to the relative case, showing that any finitely presented group can be realized as the fundamental group of complement of a symplectic surface in a simply-connected symplectic $4$-manifold, whereas the fundamental groups of complement of complex curves are quite restricted. Those examples can be further smoothly knotted by twist rim surgery so that it has led to a large class of exotic embeddings. 
Another interesting aspect of twist rim surgery is that some iteration of the twist rim surgery gives a way of constructing new smooth surfaces with certain non-abelian finite surface knot group. One consequence is that it gave an infinite family of exotic surfaces in $S^2\times S^2$ with knot group a dihedral group $D_{2p}$, for any odd $p$. 

In this paper, we prove that for all known examples of surface knots constructed from rim surgery, twisted rim surgery, and annulus rim surgery that preserve their surface knot groups, they become pairwise diffeomorphic after `single' stabilization by $(\sst,\emptyset)$. 

Another result includes an interesting phenomenon in the relative version of stabilization i.e. connected sum with $(\sss,\emptyset)$ or $(\sst,\emptyset)$. It is known that for a nonspin simply-connected $4$-manifold $X$, essentially due to Wall~\cite{wall:diffeomorphisms}, $X\cs \sss$ is diffeomorphic to $X\cs\sst$, but surprisingly it is not true for the relative case. Theorem~\ref{T:mainthmD} proves that for a degree $d$-curve $\Sigma_d$ in $\cptwo$, $(\cptwo\cs\sss,\Sigma_d)$ is {\em not} even pairwise homeomorphic to $(\cptwo\cs\sst,\Sigma_d)$, even when $\cptwo-\Sigma_d$ is not spin i.e. $d=$ even. 

Finally, we show that if a knot surgery $(X,\Sigma)\to(X_K(\varphi),\Sigma_K(\varphi))$ is cyclic, which is defined to be a surgery preserving a cyclic surface knot group~\cite{kim-danny:topotriviality}, then the pairs are $1$-stably equivalent by connected summing with $(\sst,\emptyset)$ in the case that $X$ is spin. 

\begin{remark}
Note that here we will not impose any extra assumptions on $\Sigma$ other than that $\Sigma$ is an oriented smoothly embedded surface in a simply-connected closed $4$-manifold $X$. Recall that the constructions of (twisted) rim surgery and annulus rim surgery can provide exotic embeddings of $\Sigma$ when $\Sigma$ is a surface of positive genus and $(X,\Sigma)$ has a non-trivial relative Seiberg-Witten invariant~\cite{fs:knotsurgery,fs:rim,fs1:rim,sunukjian:polyisotopy} (or a relative Heegaard-Floer invariant as in the version of Mark~\cite{mark}).

\end{remark}
The main theorems are precisely stated in the next section where it carefully describes when surface knot groups are preserved for each knotting construction. And it includes the proof of Theorem~\ref{T:mainthmD}. 


\section{Main Theorems}\label{S:mainthm}
Before we state our results, the notions of `equivalence' of embeddings of surfaces in a $4\text{-manifold}$ should be clarified as in~\cite{auckly-kim-melvin-ruberman}:

\begin{definition}\label{D:equivalencesurfaces}
Two smoothly embedded surfaces $\Sigma\,,\,\Sigma' $ in a smooth $4$-manifold $Z$ are {\em equivalent} if there is an orientation preserving pairwise diffeomorphism of $(Z, \Sigma)$ to $(Z, \Sigma')$. Two smoothly embedded surfaces $\Sigma\,,\,\Sigma' $ in a smooth $4$-manifold $X$ are {\em  $n$-stably equivalent} if the natural embeddings $\Sigma\,,\,\Sigma' \subset X\cs n\hskip.5pt\sss$ (or $n\hskip.5pt\sst$) are equivalent in $X\cs n\hskip.5pt\sss$ (or $n\hskip.5pt\sst$), but not in $X\cs k\hskip.5pt\sss$ (or $k\hskip.5pt\sst$) for any $k\le n-1$. 
\end{definition}

Note that our constructed exotic $2$-spheres in~\cite{auckly-kim-melvin-ruberman} have simply-connected complements and they are $1$-stably {\em isotopic} which is a stronger notion of equivalence of surfaces. It is still open to see the distinction between equivalence of surfaces up to {\em diffeomorphism} and {\em smooth isotopy}~\cite{ruberman:isotopy,ruberman:polyisotopy}, while this issue does not arise in the topological case~\cite{perron:isotopy,quinn:isotopy}. Here our stabilization by $\sss$ (or $\sst$) is taken in the `outside' of embedded surfaces in $X$, but there is another notion of stabilization for embedded surfaces, adding an unknotted handle to the surface. The work~\cite{baykur-sunukjian:surfacestabilization} of Baykur-Sunukjian showed that all constructions of exotic knotting of surfaces produce surfaces that become smoothly {\em isotopic} after adding a single handle in a standard way. 
\vskip5pt

Let $X$ be a smooth $4$-manifold containing a torus $T$ with a trivial normal bundle and let $K$ be a knot in $S^3$ with its closed complement $E(K)$. Fintushel-Stern's knot surgery ~\cite{fs:knotsurgery} is the process of removing a neighborhood of $T$ from $X$ and re-gluing $S^1\times E(K)$ via a diffeomorphism $\varphi$ on the boundary to form $X_{K}(\varphi)=X-\nu(T)\cup_{\varphi} S^1\times E(K)$. Denote by $\mu_T$ the boundary of the normal disk of $T$, and let the meridian/longitude of $K$ be $\mu_K$ and $\lambda_K$ respectively. Here the gluing map $\varphi:\p\nu(T)\to S^1\times \p E(K)$ can be chosen by any diffeomorphism such that $\varphi_*\mu_T=\lambda_K$. When $X$ is a simply-connected closed $4$-manifold, this operation doesn't change the homeomorphism type, while it may change its diffeomorphism type. 

Applied to a torus in the exterior of an embedded surface in a closed $4$-manifold, the knot surgery can change embeddings of surfaces in $4$-manifolds.
We assume that $X$ is a smooth simply-connected closed $4$-manifold, and $\Sigma$ is an oriented embedded surface in $X$ throughout the paper. Then the fundamental group $\pi_1(X-\Sigma)$ is normally generated by a meridian  $\mu_\Sigma$ of surface. 
For a surface $\Sigma$ carrying a non-trivial homology class in $X$, the first homology group $H_1(X-\Sigma)$ is always finite cyclic, of order that we will usually write as $d$.  The process of knotting an embedded surface $\Sigma$ in $X$ can be obtained by performing knot surgery on a torus in the exterior $X-\nu(\Sigma)$, and then gluing $(X-\nu(\Sigma))_{K}(\varphi)$ back in the neighborhood of the surface $\nu(\Sigma)$ gives a new embedding of $\Sigma$ in $X_{K}(\varphi)$ with image $\Sigma_{K}(\varphi)$. In the case of rim surgery, twist rim surgery, and annulus rim surgery, there is a canonical identification between $X$ and $X_{K}(\varphi)$ so that we can view $\Sigma_{K}(\varphi)$ as an embedding in $X$; see~\secref{prelim} for more details of these constructions. In general the resulting homeomorphism/diffeomorphism type of the new embedding $\Sigma_{K}(\phi)$ depends on a choice of torus $T$, knot $K$, and gluing map $\varphi$. Our results show that the surfaces $\Sigma_{K}(\varphi)$ and $\Sigma$ are $1$-stably equivalent under some circumstances as follows.
\vspace{0.5pc}

Rim surgery deals with surfaces with simply-connected complements in a simply-connected $4$-manifold and doesn't change the fundamental group, so the surface $\Sigma_{K}(\varphi)$ is in fact topologically isotopic to $\Sigma$ by the works in~\cite{perron:isotopy,quinn:isotopy}. The following theorem shows the stabilization result for these surfaces. 

\begin{thm}\label{T:mainthmA} Suppose that $X$ is a simply-connected closed $4$-manifold and $\Sigma$ is an smoothly embedded oriented surface with $\pi_1(X-\Sigma)=1$. Let $(X,\Sigma_{K}(\varphi))$ be a pair obtained by a rim surgery. Then  $(X,\Sigma)\cs (\sst,\emptyset)$ is pairwise diffeomorphic to $(X,\Sigma_{K}(\varphi))\cs (\sst,\emptyset)$.
\end{thm}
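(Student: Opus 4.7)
The strategy is to apply Auckly's $\sst$-stabilization theorem~\cite{auckly:stable} to the rim torus used in the surgery, exploiting the fact that this torus lies in the complement of $\Sigma$. Rim surgery on $(X,\Sigma)$ via a curve $\alpha \subset \Sigma$ and a knot $K$ is realized as a Fintushel--Stern knot surgery along a rim torus $T_\alpha \subset X - \nu(\Sigma)$, obtained by pushing $\alpha \times \mu_\Sigma$ slightly off $\partial \nu(\Sigma)$. Setting $W = X - \nu(T_\alpha)$, both pairs admit a common description
\[
(X,\Sigma) = (W,\Sigma) \cup_{\mathrm{id}} (T^2 \times D^2), \qquad (X,\Sigma_K(\varphi)) \cong (W,\Sigma) \cup_{\varphi} \bigl(S^1 \times E(K)\bigr),
\]
sharing the outer piece $(W,\Sigma)$ and differing only in how the torus boundary of $W$ is filled.

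I would next stabilize at a point inside $\nu(T_\alpha)$, so that the $\sst$-summand lies in the complement of $\Sigma$. Auckly's theorem then provides a diffeomorphism $\Psi \colon X \cs \sst \to X_K(\varphi) \cs \sst$, and the key feature---present in both Auckly's original argument and the round-handle reformulation of Baykur--Sunukjian~\cite{baykur-sunukjian:round}---is that $\Psi$ can be taken to be the identity outside a neighborhood $U$ of $T_\alpha \cup \sst$. By the choice of stabilization point, $U \subset (X - \Sigma) \cs \sst$, so $\Psi$ fixes $\Sigma$ pointwise. Composing with the canonical identification $X_K(\varphi) \cs \sst \cong X \cs \sst$ that by definition carries $\Sigma \subset X_K(\varphi)$ to $\Sigma_K(\varphi) \subset X$ yields the desired pairwise diffeomorphism $(X \cs \sst, \Sigma) \to (X \cs \sst, \Sigma_K(\varphi))$.

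The main point to verify is the locality claim for Auckly's diffeomorphism: that the passage between the trivial filling $T^2 \times D^2$ and the knot-surgery filling $S^1 \times E(K)$ can be effected by a diffeomorphism supported in $\nu(T_\alpha) \cs \sst$, without disturbing the outer piece $W$. This is the essential content of the handle-theoretic construction, in which the filling exchange is carried out entirely within the local model $(T^2 \times D^2) \cs \sst$. Once this locality is in hand, the rest of the argument is formal; the hypothesis $\pi_1(X-\Sigma)=1$ plays no role in the construction of the diffeomorphism itself, but enters only to guarantee that rim surgery preserves the simply-connected surface knot group, so that $(X,\Sigma)$ and $(X,\Sigma_K(\varphi))$ are at least topologically equivalent.
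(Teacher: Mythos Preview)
Your locality claim is false as stated, and this is not a mere technicality but the heart of the matter. You assert that the passage from the trivial filling $T^2\times D^2$ to $S^1\times E(K)$ can be effected by a diffeomorphism supported in $(T^2\times D^2)\cs\sst$. But $\pi_1\bigl((T^2\times D^2)\cs\sst\bigr)\cong\Z^2$ while $\pi_1\bigl((S^1\times E(K))\cs\sst\bigr)\cong\Z\times\pi_1(E(K))$, so for any nontrivial $K$ these fillings are not even homotopy equivalent rel boundary. Neither Auckly's argument nor the Baykur--Sunukjian round-handle reformulation produces a diffeomorphism supported in $\nu(T_\alpha)\cs\sst$; both proceed crossing by crossing, and at each crossing change the relevant $\pm1$ log transform is undone using a disk bounded by the crossing curve $c\subset S^1\times E(K)$. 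That disk necessarily leaves $\nu(T_\alpha)$ and runs through the outer piece $W$.

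This is exactly where the hypothesis $\pi_1(X-\Sigma)=1$ enters essentially, contrary to your final remark. The crossing curve $c$ always bounds an obvious disk $D_0$ in $X$, but $D_0$ meets $\Sigma_K(\varphi)$ in two points. To keep the stabilization in the complement you must find a second disk $D$ with $\partial D=c$ lying in $X-\Sigma_K(\varphi)$, and for that you need $c$ to die in $\pi_1(X-\Sigma_K(\varphi))$; the paper does this by observing that $c$ bounds a punctured torus in $E(K)$ whose generators become trivial in the simply-connected complement, then surgering the torus to a disk. Even after finding $D$ there is a residual framing question---whether the surgery yields $\sss$ or $\sst$---which the paper settles by evaluating $w_2(X)$ on $[D_0\cup -D]$ and showing this class is already zero in $H_2(S^1\times(B^3,K_+))$. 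Your sketch bypasses both of these steps, and without them the argument does not go through.
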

\begin{remark}
It turns out that the general $1$-stable {\em isotopy} principle holds for surfaces with simply-connected complements. The recent paper~\cite{auckly-kim-melvin-ruberman-schwartz} of Auckly, Melvin, Ruberman, Schwartz, and the author has shown using Gabai's result~\cite{gabai} that any two homologous surfaces of the same genus embedded in a $4$-manifold $X$ with simply-connected complements are smoothly {\em isotopic} after single stabilization with $\sss$ if the surfaces are ordinary, and $\sst$ if they are characteristic. 

\end{remark}

Finashin's annulus rim surgery~\cite{finashin} requires a suitable annulus $M \cong S^1\times I$ in $X$ to produce a new surface via knotting $\Sigma$ along the annulus. This surgery in his paper is given by an explicit geometric description of the surgered surface, but in~\cite{kim-danny:topotriviality} a knot surgery description for this surgery is provided; see~\secref{annulusrimsurgery} for this description. It is shown in~\cite{finashin,kim-danny:topotriviality} that annulus rim surgery preserves the surface knot group when $\pi_1(X-\Sigma)=\Z_d$, and it turns out that the surface $\Sigma_K(\varphi)$ is topologically isotopic to $\Sigma$ by the work in~\cite[Theorem 1.3]{kim-danny:topotriviality}.

\begin{thm}\label{T:mainthmB} Suppose that $X$ is a simply-connected closed $4$-manifold and $\Sigma$ is an smoothly embedded oriented surface with $\pi_1(X-\Sigma)=\Z_d$. Let $(X,\Sigma_{K}(\varphi))$ be a pair obtained by an annulus rim surgery. Then $(X,\Sigma)\cs (\sst,\emptyset)$ is pairwise diffeomorphic to $(X,\Sigma_{K}(\varphi))\cs (\sst,\emptyset)$.
\end{thm}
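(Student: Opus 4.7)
The plan is to realize an annulus rim surgery as a Fintushel--Stern knot surgery on a torus $T$ lying in $X-\nu(\Sigma)$, and then to adapt Auckly's $\sst$-stabilization theorem~\cite{auckly:stable} to the complement of $\Sigma$, producing the required pairwise diffeomorphism while $\Sigma$ is kept fixed throughout.

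First, I would use the knot surgery description of annulus rim surgery recalled in~\secref{annulusrimsurgery}: surgery along an annulus $M\cong S^1\times I$ is equivalent to knot surgery on a specific rim torus $T\subset X-\nu(\Sigma)$ built from $M$, so that
\[
(X,\Sigma_K(\varphi))\cong\big((X-\nu(T))\cup_{\varphi}(S^1\times E(K)),\,\Sigma\big),
\]
with $\Sigma$ contained in $X-\nu(T)$ and disjoint from the gluing region. The fact that annulus rim surgery preserves $\pi_1(X-\Sigma)=\Z_d$~\cite{finashin,kim-danny:topotriviality} reflects the key geometric observation that the meridian $\mu_T$ of the rim torus bounds a (possibly immersed) disk in $X-\Sigma$; indeed such a disk is furnished by the annulus $M$ together with meridional disks of $\Sigma$ at its ends.

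Next, I would mimic Auckly's proof in this relative setting. Using the sphere classes in the $\sst$ summand added to $X-\nu(\Sigma)$, I would tube the self-intersections and adjust the framing of the immersed null-homotopy above to produce an embedded framed disk $D\subset(X-\nu(\Sigma))\cs\sst$ with $\p D=\mu_T$ and $D\cap\Sigma=\emptyset$. A regular neighborhood of $T\cup D$ in this complement contains the support of the knot surgery, and by the argument of~\cite{auckly:stable} (see also~\cite{baykur-sunukjian:round}) this neighborhood is diffeomorphic, rel boundary, to the corresponding neighborhood for the trivial surgery. Since everything is disjoint from $\Sigma$, gluing back $\nu(\Sigma)$ yields the desired pairwise diffeomorphism
\[
(X,\Sigma_K(\varphi))\cs(\sst,\emptyset)\;\cong\;(X,\Sigma)\cs(\sst,\emptyset).
\]

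The main obstacle I anticipate is this middle step: controlling the position of $D$ and the tubing operations so that nothing is forced to cross $\Sigma$. In Auckly's ambient setting the simple-connectivity of $X$ allows loops and disks to be slid freely; here the complement $X-\nu(\Sigma)$ has $\pi_1=\Z_d$ and carries $\Sigma$ as a concrete obstruction. The precise geometric structure of the annulus rim construction---especially the existence of the annulus $M$ as a natural candidate for the null-homotopy of $\mu_T$---should dictate where the $\sst$ sphere tubes are attached, ensuring that all embedding and framing corrections can be performed inside $(X-\nu(\Sigma))\cs\sst$ without crossing $\Sigma$.
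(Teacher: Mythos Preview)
Your high-level plan—to adapt Auckly's $\sst$-stabilization argument to the complement of $\Sigma$—is the right instinct, and it is essentially what the paper does. But the specific mechanism you propose does not work.

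The disk you want does not exist. You need $\mu_T$ to bound a disk in $(X-\Sigma)\setminus T$ (otherwise a ``regular neighborhood of $T\cup D$'' is just $\nu(T)$ and carries no new information). But the meridian $m_b$ of the band bounds an obvious disk $D_{m_b}$ in $B^3\setminus f$, so $T=S^1\times m_b$ bounds the solid torus $S^1\times D_{m_b}$ in $X-\Sigma$ and hence $[T]=0$ in $H_2(X-\Sigma)$. If $\mu_T$ bounded a disk $D$ disjoint from $T$, capping $D$ with the normal disk would produce a closed surface meeting $T$ algebraically once, contradicting $[T]=0$. This obstruction is homological and persists after adding $\sst$. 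Your proposed construction of the disk is also confused: the annulus $M$ capped with meridional disks of $\Sigma$ is a sphere, not a disk with boundary $\mu_T$, and those meridional disks meet $\Sigma$.

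The paper avoids this by never working with the original surgery torus $T$. Instead it unknots $K$ one crossing at a time: each crossing change is a $\pm 1$ log transform on a torus $T_c=S^1\times c\subset S^1\times E(K)$, and the point specific to annulus rim surgery with $\pi_1(X-\Sigma)=\Z_d$ is that the image of $\pi_1(S^1\times E(K))$ in $\pi_1(X-\Sigma_K(\varphi))$ is \emph{trivial}, so the curve $c$ (not $\mu_T$) is null-homotopic in the surface complement. The paper then builds an explicit round-handle cobordism and, crucially, tracks the surgery framing on $c$ relative to a disk in $X-\Sigma_K(\varphi)$ by evaluating $w_2(X)$ on the union of this disk with the obvious disk through $\Sigma_K(\varphi)$; this is what pins down $\sst$ rather than $\sss$. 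Your proposal does not address the framing at all, and without it one cannot distinguish the two bundles.
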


In order to explore this phenomenon for surface knots with arbitrary knot groups, we consider twist rim surgery~\cite{kim:twistrim,kim-danny:topotriviality,kim-danny:symsurface}, a variation of the Finstushel-Stern's rim surgery with additional twists parallel to a meridian and a longitude of a knot $K$. We write the meridian twist rim surgery as `$m$-twist rim surgery' when we wish to indicate the number of twists applied on the meridian of $K$, and also denote by $\Sigma_K(m)$ the new embedding produced from $\Sigma$ under the surgery. The way in which $m$-twist rim surgery affects the fundamental group of a surface knot depends to some degree on the relation between $m$ and $d$, where $H_1(X-\Sigma)\cong Z_d$.
For example, when $m=\pm 1$, the twist rim surgery always preserves the fundamental group of a surface knot; the proof was given for $1$-twist in~\cite[Proposition 2.3]{kim-danny:symsurface}, but it works for $-1$-twist in the exactly same way. The $1$-twist rim surgery allows us to construct exotic smooth embeddings for a symplectic surface with any finitely presented knot group in a symplectic $4$-manifold (see~\cite[Theorem 3.1, 5.2]{kim-danny:symsurface} for more details).  More generally, Proposition 2.4 in~\cite{kim-danny:symsurface} shows when an $m$-twist rim surgery preserves the fundamental group of surface knots as shown that for a surface $\Sigma\subset X$ with $H_1(X-\Sigma)\cong \Z_d$, if $(m,d)=1$ and the meridian $\mu_{\Sigma}$ has order $d$ in $\pi_1(X-\Sigma)$, then $\pi_1(X-\Sigma)\cong \pi_1 (X- \Sigma_{K}(m))$. This criterion is used to produce infinitely many exotic embeddings in $S^2\times S^2$ with knot group a dihedral group $D_{2p}$ for any odd $p$~\cite[Theorem 5.1]{kim-danny:symsurface}. Note that when $\pi_1(X-\Sigma)=\Z_d$, the surface $\Sigma_{K}(m)$ with $(m,d)=1$ is topologically isotopic to $\Sigma$; see~\cite[Theorem 1.3]{kim-danny:topotriviality}. But for arbitrary surface knot groups, when the knot $K$ is chosen carefully, $\Sigma_{K}(m)$ is equivalent to $\Sigma$ up to smooth $s$-cobordism; see~\cite{kim-danny:symsurface} for more details. In all cases that surface knot groups are preserved under twist rim surgery, we show that $\Sigma_{K}(m)$ and $\Sigma$ are $1$-stably equivalent:

\begin{thm}\label{T:mainthmC} 
Suppose that the surface $\Sigma\subset X$ has $H_1(X-\Sigma)\cong \Z_d$, and let $\pi_1(X-\Sigma)$ be any group $G$. Then the following is true.
\vskip4pt
\begin{enumerate}
\item\label{t:sigma1} $(X\cs\sst,\Sigma)$ is pairwise diffeomorphic to $(X\cs\sst, \Sigma_{K}(\pm 1))$. 
\vskip4pt
\item\label{t:sigmam} If $(m,d)=1$ and $\mu_{\Sigma}$ has order $d$ in $\pi_1(X-\Sigma)$ then $(X\cs\sst,\Sigma)$ is pairwise diffeomorphic to $(X\cs\sst, \Sigma_{K}(m))$. 
\end{enumerate}

\end{thm}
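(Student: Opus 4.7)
The plan is to decompose the $m$-twist rim surgery into an untwisted knot-surgery piece plus an iterated meridional Dehn twist, and to trivialize each piece after a single $\sst$ stabilization. An $m$-twist rim surgery along an essential simple closed curve $\alpha\subset\Sigma$ with knot $K$ modifies $(X,\Sigma)$ only in a neighborhood of the rim torus $T_\alpha=\alpha\times\mu_\Sigma$ together with the rim annulus $\alpha\times D^1\subset\nu(\Sigma)$. Its defining gluing $\varphi_m\co\p\nu(T_\alpha)\to S^1\times\p E(K)$ sends $\mu_{T_\alpha}$ to $\lambda_K+m\mu_K$, so $\varphi_m=\tau^m\circ\varphi_0$ where $\tau$ is a meridional Dehn twist on $\p\nu(T_\alpha)$. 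This presents $\Sigma_K(m)$ as the image of the untwisted $\Sigma_K(0)$ under $m$ iterated meridional twists supported near the rim annulus.

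The untwisted piece is then dispatched by the stabilization argument underlying Theorems~\ref{T:mainthmA} and~\ref{T:mainthmB}. That argument --- a relative form of the Auckly/Akbulut/Baykur--Sunukjian cancellation for knot surgery --- is localized in a neighborhood of $T_\alpha$ and so applies with $\pi_1(X-\Sigma)$ arbitrary: after inserting one $\sst$ summand into a ball adjacent to the rim annulus, $T_\alpha$ bounds a solid torus and $\varphi_0$ becomes isotopic to the identity, giving a pairwise diffeomorphism $(X\cs\sst,\Sigma_K(0))\cong(X\cs\sst,\Sigma)$. It therefore suffices to show that $\Sigma_K(m)$ and $\Sigma_K(0)$ are pairwise diffeomorphic inside $X\cs\sst$.

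Realizing the meridional twist $\tau$ as an ambient self-diffeomorphism of $(X\cs\sst,\Sigma)$ is the main obstacle. For part~(1) with $m=\pm 1$, I expect the added $\sst$ summand to suffice on its own: the fiber sphere of the $\sst$, tubed into the rim annulus via a cocore arc, provides an embedded $2$-disk along which a Gluck-style twist implements a single meridional rotation of $\mu_\Sigma$ while fixing $\Sigma$ setwise. This construction lives entirely inside the stabilization ball and so requires no hypothesis on $\pi_1(X-\Sigma)$, exactly matching the statement of~(1). For part~(2), the order-$d$ hypothesis on $\mu_\Sigma$ supplies an immersed $2$-chain in $X$ with boundary $d\mu_\Sigma$; combining $q$ applications of the unit twist from the stabilization with a slide through this chain, and choosing $q\equiv m\pmod d$ via B\'ezout (possible because $(m,d)=1$), should realize exactly $m$ net meridional twists.

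I anticipate that the most delicate verification will be that the resulting composite diffeomorphism preserves $\Sigma$ setwise and that the framing contributions from the individual Dehn twists cancel after handle slides into the $\sst$ factor. A Kirby-diagrammatic argument parallel to the untwisted case should resolve both points, with the arithmetic conditions on $m$ and $d$ entering precisely where one needs to identify the residual meridional class with the trivial element of $H_1(X-\Sigma)$.
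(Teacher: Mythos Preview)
Your strategy diverges sharply from the paper's, and its first reduction step contains a genuine gap. You assert that the stabilization argument for untwisted rim surgery (Theorem~\ref{T:mainthmA}) is ``localized in a neighborhood of $T_\alpha$ and so applies with $\pi_1(X-\Sigma)$ arbitrary.'' It is not. The paper's argument for rim surgery proceeds crossing-by-crossing: for each crossing curve $c\subset E(K)$ one must exhibit a disk $D$ with $\partial D=c$ lying in the \emph{surface complement} $X-\Sigma_K(\varphi)$, and the framing is then read off from $w_2(X)$ on $[D_0\cup -D]$. Finding that disk is a global problem; in the untwisted case (Theorem~\ref{T:onecrosssdiffeorim}) the paper uses $\pi_1(X-\Sigma_K(\varphi))=1$ to cap off a generator of the punctured torus $T_*\subset E(K)$ by a disk living in $X-\Sigma-\nu(R_\alpha)$, well outside any neighborhood of the rim torus. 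With $\pi_1(X-\Sigma)=G$ arbitrary, that disk need not exist, so your claim that $(X\cs\sst,\Sigma_K(0))\cong(X\cs\sst,\Sigma)$ for general $G$ is unsubstantiated. Your second step---realizing $\tau^m$ as an ambient pairwise diffeomorphism via a ``Gluck-style twist'' along a tubed fiber sphere---is suggestive but not a proof; you would need to check that the putative diffeomorphism actually carries $\Sigma_K(0)$ to $\Sigma_K(m)$ and that the framing bookkeeping works, and no mechanism is given. (Incidentally, your description of $\varphi_m$ is off: in the paper's conventions $\varphi_*(\mu_R)=\lambda_K$ for all $m$; the $m$-twist enters through $\varphi_*(\alpha')=m\mu_K+[S^1]$, equivalently as the monodromy $\tau^m$ in~\eqref{twistrimsurgery2}.)

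The paper never separates the twist from the knot. Instead it builds, for each crossing change $K\to K'$, an explicit $5$-dimensional cobordism $W$ from $(X,\Sigma_K(m))\sqcup S^1\times S^3$ to the fiber sum $(X,\Sigma_K(m))\fcs{T_c}{T_u}S^1\times S^3\cong (X,\Sigma_{K'}(m))$ by attaching a $1$-handle, two $2$-handles $H^2_{\gamma a},H^2_{cu}$, and a $3$-handle. The hypotheses on $m$ enter precisely at Proposition~\ref{P:curveC}: they force the crossing curve $c$ to be null-homotopic in $X-\Sigma_K(m)$ (via the relation $\mu_K^{-m}g\mu_K^m=g$ in the presentation~\eqref{mtwistfundgpeq2}), and the explicit disk built in Lemma~\ref{L:S2bundleS2} lets one verify the surgery framing is the twisted one. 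Reading the middle level of $W$ from both ends gives $(X\cs\sst,\Sigma_K(m))\cong(X\cs\sst,\Sigma_{K'}(m))$; iterating over an unknotting sequence yields the theorem. If you want to salvage your decomposition idea, you would first have to prove an untwisted rim-surgery stabilization theorem valid for arbitrary $G$, which is not in the literature and is not what Theorems~\ref{T:mainthmA}--\ref{T:mainthmB} provide.
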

\vskip5pt

Now, we give a simple proof to show an interesting phenomenon in this relative stabilization.  Wall's stabilization result~\cite{wall:diffeomorphisms} for a nonspin simply-connected $4$-manifold $X$ shows that $X\cs \sss$ is diffeomorphic to $X\cs\sst$, but interestingly it fails as follows: 

\begin{thm}\label{T:mainthmD} Let $\Sigma_d$ be a degree $d$-curve in $\cptwo$. Then the pair $(\cptwo\cs\sss,\Sigma_d)$ is `not' pairwise homeomorphic to $(\cptwo\cs\sst,\Sigma_d)$. 
\end{thm}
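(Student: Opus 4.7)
My plan is to distinguish the two pairs by a single purely algebraic invariant of the intersection lattice on $H_2$: the parity (even versus odd) of the orthogonal complement of $[\Sigma_d]$. Suppose for contradiction that
$\Phi\co(\cptwo\cs\sss,\Sigma_d)\to(\cptwo\cs\sst,\Sigma_d)$
is a pairwise homeomorphism. Then $\Phi_*$ is an isomorphism on $H_2(\,\cdot\,;\Z)$ preserving intersection forms up to a global sign, and because $\Phi$ restricts to a self-homeomorphism of the connected closed surface $\Sigma_d$ one has $\Phi_*[\Sigma_d]=\pm[\Sigma_d]$. Consequently $\Phi_*$ restricts to an isomorphism
\[
L_1:=\bigl(\Z\cdot[\Sigma_d]\bigr)^{\perp}\subset H_2(\cptwo\cs\sss)\ \xrightarrow{\ \sim\ }\ L_2:=\bigl(\Z\cdot[\Sigma_d]\bigr)^{\perp}\subset H_2(\cptwo\cs\sst),
\]
preserving the intersection pairing up to the same overall sign.

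Next I compute the two lattices explicitly. Using the basis $\{H,S_1,S_2\}$ of $H_2(\cptwo\cs\sss)$, with $H$ the hyperplane class and $S_1,S_2$ the sphere factors of $\sss$, one has $[\Sigma_d]=dH$, and $(aH+bS_1+cS_2)\cdot dH=da=0$ forces $a=0$, so $L_1=\Z\langle S_1\rangle\oplus\Z\langle S_2\rangle$ carries the hyperbolic form $\bigl(\begin{smallmatrix}0&1\\ 1&0\end{smallmatrix}\bigr)$; every square in $L_1$ lies in $2\Z$, so $L_1$ is \emph{even}. Using the standard identification $\sst\cong\cptwo\cs\cptwobar$ and a corresponding basis $\{H,H',E\}$ with $H'\cdot H'=+1$ and $E\cdot E=-1$, the same calculation gives $L_2=\Z\langle H'\rangle\oplus\Z\langle E\rangle$ with diagonal form $\bigl(\begin{smallmatrix}1&0\\ 0&-1\end{smallmatrix}\bigr)$; since $H'\cdot H'=1\notin 2\Z$, the lattice $L_2$ is \emph{odd}. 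Reversing the overall sign of the pairing does not affect these parities, so no isometry (of either sign) can carry $L_1$ to $L_2$, producing the required contradiction.

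I expect no substantial obstacle: the whole argument reduces to a rank-two lattice calculation. The only mildly delicate step is the claim $\Phi_*[\Sigma_d]=\pm[\Sigma_d]$, which comes from the elementary fact that any self-homeomorphism of a connected closed orientable surface acts by $\pm 1$ on its top homology. The argument is purely topological and never uses that $[\Sigma_d]$ is characteristic in $\cptwo\cs\sss$, so it automatically accounts for the parenthetical remark in the statement that the conclusion holds even when $d$ is even (and $\cptwo\cs\sss\setminus\Sigma_d$ is nonspin); the parity dichotomy here is driven by the intersection form of the $\sss$ versus $\sst$ summand, not by any spin obstruction associated to $\Sigma_d$.
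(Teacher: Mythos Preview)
Your proof is correct and is essentially the paper's own argument, repackaged: both show that classes orthogonal to $[\Sigma_d]$ in $H_2(\cptwo\cs\sss)$ all have even square while in $H_2(\cptwo\cs\sst)$ there is an odd one. The paper phrases this as the (non)existence of an odd class in $H_2$ of the surface complement (using that such classes pair trivially with $[\Sigma_d]$), which is exactly your computation of the parity of the orthogonal-complement lattices $L_1$ and $L_2$.
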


\begin{proof}
If $d$ is odd then it is obvious since $\cptwo-\Sigma_d$ is spin. But, we will show that the pairs are still not homeomorphic in the case that $d$ is even so that $\cptwo-\Sigma_d$ is not spin. We claim that there is no odd class in $H_2((\cptwo-\Sigma_d)\cs\sss)$. For any $[S]\in H_2((\cptwo-\Sigma_d)\cs\sss)$, the homology class $[S]$ can be written by $k[\cpone]+n_a S_a+n_b S_b$ in $H_2(\cptwo\cs\sss)$, where $S_a$ and $S_b$ denote the first and second generators of $H_2(\sss)$ respectively. Then it gives $[S]\cdot[\Sigma_d]=kd$ that must be zero, so $k=0$. This implies that there is no odd class in $H_2((\cptwo-\Sigma_d)\cs\sss)$, but there is in $H_2((\cptwo-\Sigma_d)\cs\sst)$. 
\end{proof}

\begin{remark}
It is worth pointing out that there is no odd class in $\cptwo-\Sigma_d$ even when $\cptwo-\Sigma_d$ is not spin. To understand this, first note that the nonzero element $\alpha\in H_2(\cptwo-\Sigma_d)$ has $\alpha^2=0$ as shown in the above proof, so it gives $Q_{\cptwo-\Sigma_d}=0$. In fact the handlebody picture of $\cptwo-\Sigma_d$ shows that there are $2g$ $0$-framed $2$-handles, and a $1$-framed $2$-handle which is $d$-times linked with a $1$-handle; see Exercises 6.2.12.(c)~\cite{GS}. For $d=$ even, there is a $\Z_2$-homology class $\beta$ of the $1$-framed $2$-handle and over $\Z_2$, the intersection form is given by $[1]$. By the Wu formula, $w_2(\cptwo-\Sigma_d)$ vanishes on $\alpha$, but has value $1$ on the $\Z_2$-homology class $\beta$.
\end{remark}

Finally, we focus on the case that $\pi_1(X-\Sigma)$ is a cyclic group $\Z_d$, and investigate the stabilization problem of knot surgery. As the terminology in~\cite{kim-danny:topotriviality}, if a knot surgery $(X,\Sigma)\to (X_{K}(\varphi), \Sigma_{K}(\varphi))$ satisfies that $\pi_1(X-\Sigma)\cong \pi_1 (X_{K}(\varphi)- \Sigma_{K}(\varphi))$ is cyclic then the knot surgery is called a {\em cyclic surgery}. In~\cite[Theorem 1.2]{kim-danny:topotriviality}, Ruberman and the author showed that for any pair $(X,\Sigma)$ that $X$ is simply-connected and $\Sigma$ is an embedded surface with $\pi_1(X-\Sigma)\cong \Z_d$, if a knot surgery $(X,\Sigma)\to (X_{K}(\varphi), \Sigma_{K}(\varphi))$ is cyclic then there is a pairwise homeomorphism $(X,\Sigma)\to (X_{K}(\varphi), \Sigma_{K}(\varphi))$. Thus, it is natural to ask the $1$-stable equivalence for the cyclic knot surgery. We answer for this question in the case that $X$ is spin:

 \begin{thm}\label{T:mainthmE} Let $X$ be a simply-connected, closed, spin $4$-manifold and $\Sigma$ be an embedded oriented surface with $\pi_1(X-\Sigma)\cong \Z_d$. Suppose that the knot surgery $(X,\Sigma)\to (X_{K}(\varphi), \Sigma_{K}(\varphi))$ is cyclic. Then $(X,\Sigma)$ is pairwise diffeomorphic to $(X_{K}(\varphi), \Sigma_{K}(\varphi))$ after one stabilization with $(\sst,\emptyset)$.
\end{thm}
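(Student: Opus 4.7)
The plan is to reduce Theorem~\ref{T:mainthmE} to a relative version of Auckly's stabilization argument performed inside the exterior $W := X\setminus\nuopen(\Sigma)$. Because the knot surgery torus $T$ lies in $W$ and the surgery preserves the tubular neighborhood $\nu(\Sigma)$, the pair $(X_K(\varphi),\Sigma_K(\varphi))$ is obtained by excising $\nu(T)\subset W$, regluing $S^1\times E(K)$ to form a new exterior $W_K(\varphi)$, and re-attaching the unchanged neighborhood $\nu(\Sigma)$ by the canonical identification $\p W_K(\varphi)\cong\p W$. Thus the theorem reduces to producing a diffeomorphism $W\cs\sst\to W_K(\varphi)\cs\sst$ rel $\p W$, with the $\sst$-summand inserted in the interior of $W$ away from $T$ and $\nu(\Sigma)$; such a diffeomorphism extends by the identity across $\nu(\Sigma)$ to give the desired pairwise diffeomorphism.

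First I would dispose of the topological content. The cyclic surgery hypothesis together with~\cite[Theorem 1.2]{kim-danny:topotriviality} yields a pairwise homeomorphism $(X,\Sigma)\cong(X_K(\varphi),\Sigma_K(\varphi))$, which restricts to a homeomorphism $W\cong W_K(\varphi)$ rel boundary. In particular the ranks and intersection forms of $W$ and $W_K(\varphi)$ agree, and the question becomes purely a matter of smooth structures.

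The smooth step is an adaptation of Auckly's~\cite{auckly:stable} proof that $X\cs\sst\cong X_K(\varphi)\cs\sst$. Auckly's diffeomorphism is produced by a finite sequence of handle slides and a cork/Gluck-style move, all supported in a compact submanifold assembled from a neighborhood of $T$ and the $\sst$-summand. Since both these pieces lie inside $W$ by construction, the same sequence of moves can be carried out entirely within $W\cs\sst$ without ever disturbing $\p W$, producing a diffeomorphism $W\cs\sst\to W_K(\varphi)\cs\sst$ rel $\p W$ as required.

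The main obstacle is verifying that each handle move in Auckly's sequence remains valid in the non-simply-connected exterior $W\cs\sst$, whose fundamental group is $\Z_d$. The moves rely on the existence of embedded $2$-spheres with specified self-intersection data; in a non-simply-connected $4$-manifold the existence of such spheres can be obstructed by framing and $w_2$-considerations lifted to the universal cover. This is where the spin hypothesis on $X$ enters: it implies that $W$ is spin, its intersection form is even, and the sphere fibers of the $\sst$-summand supply spheres of the correct framings for Auckly's moves without any $w_2$-obstruction from $\pi_1(W)=\Z_d$. This isolates the role of spin-ness, in contrast with the rim-surgery setting of Theorems~\ref{T:mainthmA}--\ref{T:mainthmC}, where the rim-torus structure already forces the necessary sphere classes to exist automatically.
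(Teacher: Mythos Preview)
Your high-level strategy—reducing to a rel-$\partial$ diffeomorphism of exteriors after one $\sst$-stabilization—is essentially what the paper does in Section~\ref{S:stabcylicknotsurgery}, building on the cobordism machinery of Section~\ref{S:basicconstruction}. However, your proposal remains a sketch rather than a proof, and it mislocates the two decisive technical steps.

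The paper, like Auckly, reduces to showing that for knots $K,K'$ differing by one crossing change, the pairs $(X_{K}(\varphi),\Sigma_{K}(\varphi))$ and $(X_{K'}(\varphi),\Sigma_{K'}(\varphi))$ become pairwise diffeomorphic after one $\sst$-stabilization. The crossing change is a $\pm1$-log transform along $T_c=S^1\times c\subset S^1\times E(K)$, analyzed via a $5$-dimensional round-handle cobordism; Auckly's argument proceeds the same way, not via ``handle slides and a cork/Gluck-style move,'' and the relevant disks are not confined to a neighborhood of $T$ together with the $\sst$-summand. The middle level of this cobordism is the result of surgery on $c$ in $(X_K(\varphi),\Sigma_K(\varphi))$, and two things must be checked:

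\emph{(i) The curve $c$ bounds a disk in the exterior $X_K(\varphi)-\Sigma_K(\varphi)$.} This holds because $c$ is a commutator $g^{-1}\mu_K^{-1}g\mu_K$ in $\pi_1(E(K))$ and $\pi_1(X_K(\varphi)-\Sigma_K(\varphi))\cong\Z_d$ is abelian. This is where the \emph{cyclic} hypothesis enters—not the spin hypothesis. Your proposal never isolates this step, and your phrasing (``the existence of such spheres can be obstructed by framing and $w_2$-considerations'') conflates the $\pi_1$-obstruction to bounding at all with the subsequent framing question.

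\emph{(ii) The surgery framing, relative to a disk $D\subset X_K(\varphi)-\Sigma_K(\varphi)$, is odd.} The framing is already known to be odd relative to a disk $D_0$ meeting $\Sigma_K(\varphi)$ (Lemma~\ref{L:2handlesurgeryframing}); the two are compared via $\langle w_2(X_K(\varphi)),[D_0\cup -D]\rangle$. Since $X$ spin implies $X_K(\varphi)$ spin, this pairing vanishes and the framing stays odd, forcing $\sst$. That is the sole role of the spin hypothesis; your explanation via ``sphere fibers of the $\sst$-summand'' and a ``$w_2$-obstruction from $\pi_1$'' does not capture it. In the rim-surgery Theorems~\ref{T:mainthmA}--\ref{T:mainthmC} the same pairing is shown to vanish for a different reason (the class $[D_0\cup -D]$ is exhibited as zero in $H_2$), which is why spin is not needed there.

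Finally, the appeal to \cite[Theorem~1.2]{kim-danny:topotriviality} is unnecessary: the argument is constructive and never uses the topological homeomorphism as input.
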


\begin{remark}
In the contrast to the well-known stabilization theorems for simply-connected $4$-manifolds, the relative stabilization for cyclic knot surgery doesn't seem to give any general statement for a choice of $(\sss,\emptyset)$ or $(\sst,\emptyset)$ in the case of a nonspin $4$-manifold $X$. Our main argument for stabilization results will rely on proving the $1$-stable equivalence of surface knots $\Sigma_{K_{i}}(\varphi)$ and $\Sigma_{K_{i+1}}(\varphi)$ for two knots $K_{i}$ and $K_{i+1}$ related by one crossing change.  At each stage of crossing change that will make any knot to an unknot, one cannot assert that the pairs $(X_{K_{i}}(\varphi),\Sigma_{K_{i}}(\varphi))$ and $(X_{K_{i+1}}(\varphi),\Sigma_{K_{i+1}}(\varphi))$ become pairwise diffeomorphic after one stabilization with {\em only} $(\sss,\emptyset)$ or with {\em only} $(\sst,\emptyset)$ when $X-\Sigma$ is nonspin. This issue arises because $(X_{K_{i}}(\varphi),\Sigma_{K_{i}}(\varphi))\cs (\sss,\emptyset)$ may not be pairwise diffeomorphic to $(X_{K_{i}}(\varphi),\Sigma_{K_{i}}(\varphi))\cs (\sst,\emptyset)$ as seen in the proof of~\thmref{mainthmD}.

\end{remark}


\section{Knot surgery constructions to change embeddings in $4$-manifolds}\label{S:prelim}

Let $X$ be a simply-connected closed $4$-manifold and $\Sigma$ be an embedded oriented surface.


\subsection{Twist rim surgery}\label{S:twistedrimsurgery}
Let $R_{\alpha}$ be a torus with $R_{\alpha}\cdot R_{\alpha}=0$ (called a {\em rim torus}) that is the preimage in $\p{\nu(\Sigma)}$ of a closed curve $\alpha\subset\Sigma$. Identify the neighborhood $\nu(\alpha)$ of the curve $\alpha$ in $X$ with $S^1\times I\times D^2=S^1\times B^3$ where $\nu(\alpha)$ in $\Sigma$ is $S^1\times I$. In this trivialization, let $\beta$ be a pushed-in copy of the meridian circle $\{0\}\times\p D^2\subset I\times D^2$, so it is isotopic to a meridian of $\Sigma$. Then the rim torus $R_{\alpha}$ can be written as $\alpha\times\beta\subset S^1\times(B^3, I)$ and we will identify a neighborhood $\nu(R_{\alpha})$ of $R_{\alpha}$ with $\alpha\times (\beta\times D^2)\subset S^1\times(B^3, I)$. Let $K$ be a knot in $S^3$ with its closed exterior $E(K)$, and $\mu_K, \lambda_K$ denotes a pair of meridian-longitude of $K$. The $m$-twists and $n$-rolls of rim surgery on $(X,\Sigma)$ is defined by 
$$
(X,\Sigma_{K}(\varphi))=(X,\Sigma)-\nu(R_{\alpha})\cup_{\varphi}S^1\times E(K).
$$

Here the gluing map $\varphi:  \p \nu(R_{\alpha})\to S^1\times \p E(K) $ is the diffeomorphism determined by 
\begin{equation}\label{twistrimsurgery:gluing}
\varphi_{*}(\alpha')= m\mu_{K}+n\lambda_{K}+[S^1], \quad  
\varphi_{*}(\beta')=\mu_{K},  \quad   \text{and}  \quad  \varphi_{*}(\mu_{R})=\lambda_{K}
\end{equation}
with respect to a basis $\{\alpha',\beta', \mu_{R} \}$ for $H_1(\p{\nu(R_{\alpha})})$ and $\{[S^1],\mu_K, \lambda_K\}$ for $H_1(S^1\times \p E(K)) $, where $\alpha',\beta'$ are the pushoffs of  $\alpha$, $\beta$ into $\p \nu(R_{\alpha})$ and $\mu_{R}$ denotes a meridian of the rim torus. 

Such a gluing corresponds to the \emph{spinning} construction of the rim surgery of Fintushel-Stern i.e. $m=n=0$, adding a combination of $m$-fold twist spinning~\cite{zeeman:twist} and $n$-fold roll spinning~\cite{fox:rolling,litherland:deform}. 
It is useful to specify these twists by classical diffeomorphisms that give equivalent descriptions for the twisted rim surgery.

Consider self-diffeomorphisms denoted by $\tau$ and $\rho$ of $(S^3,K)$ that correspond to twists parallel to a meridian and a longitude of $K$ respectively.
Let $\partial E(K)\times I=K\times \partial D^2\times I$ be a collar of $\partial E(K)$ in $E(K)$ under a suitable trivialization with $0$-framing. Identify $K$ with $S^1\cong \R/\Z$ and then the twist map $\tau$ is given by
\begin{equation}\label{tau}
\tau(\overline\theta, e^{i\psi}, t) =(\overline\theta,
e^{i(\psi + 2\pi t)}, t) \quad \mbox{for} \quad (\overline\theta,
e^{i\psi}, t)\in K\times{\partial{D^2}}\times{I}
\end{equation}
and otherwise, $\tau(y)=y$. 

Similarly, a roll, $\rho$, is obtained from
$\rho(\overline\theta, e^{i\psi}, t) =(\overline{\theta+t}, e^{i\psi}, t)$
by extending as the identity on the rest of $(S^3,K)$. 

Although a roll can also produce exotic embeddings, we will only deal with an $m$-twist rim surgery in this paper since a meridian twist is sufficiently useful to construct all desired smoothly knotted surfaces. Most of the arguments for the stabilization result of the $m$-twist rim surgery be easily modified to address the rolling as well. 

Writing $(S^3,K) = (B^3,K_+) \cup  (B^3,K_{-})$ where $(B^3,K_{-})$ is an unknotted ball pair, we regard $\tau$ as an automorphism of $(B^3,K_+)$. 
Since the rim torus $R_{\alpha}$ lies in a neighborhood of the curve $\alpha$, the twisted rim surgery performed in $\nu(\alpha)\cong S^1\times (B^3,I)$ gives rise to the mapping torus of $(B^3,K_+)$ with monodromy given by the twist map $\tau$. So the $m$-twisted rim surgery on $(X,\Sigma)$ can be written as follows;
\begin{equation}\label{twistrimsurgery2}
(X,\Sigma_{K}(m))=(X,\Sigma)-S^1\times (B^3,I)\cup_{\partial} S^1 \times_{\tau^m}(B^3,K_+).
\end{equation}

In doing any rim surgery (twisted or otherwise) we assume that $\alpha \subset \Sigma$ is a curve for which there is a framing of $\nu(\Sigma)$ along $\alpha$ such that the pushoff of $\alpha$ into $\partial\nu(\Sigma)$ is null-homotopic in $X-\Sigma$.  
But we don't assume that $\alpha$ is a non-separating curve on $\Sigma$, which is necessary to distinguish the diffeomorphism type of $\Sigma_{K}(m)$ from that of $\Sigma$ with Seiberg-Witten invariant. Note from~\cite[Lemma 2.2]{kim:twistrim} that if $\alpha$ bounds a disk in $\Sigma$, the surface $\Sigma_{K}(m)$ is the connected sum of $\Sigma$ with the $m$-twist spun knot $K(m)$ of Zeeman~\cite{zeeman:twist}. Our stabilization results include this example as well. 

\subsection{\bf Twisted rim surgery and the surface knot group}\label{S:twistrimfundamentalgp}

As mentioned in~\secref{mainthm}, $\pm 1$-twist rim surgery always preserves surface knot groups~\cite[Proposition 2.3]{kim-danny:symsurface}, and also Proposition 2.4 in~\cite{kim-danny:symsurface} shows when an $m$-twist rim surgery preserves the fundamental group.  Here we will revisit Proposition 2.4 with more elementary argument (compare the proof in~\cite{kim-danny:symsurface}) since it explicitly provides the presentation of the fundamental group for later use. 

\begin{proposition}[Proposition 2.4 in~\cite{kim-danny:symsurface}]\label{P:twistrimfundagp}
Let $\pi_1(X -\Sigma)$ be any group $G$. Suppose that the surface $\Sigma \subset X$ has $H_1(X -\Sigma) =\Z_d$ and the meridian $\mu_\Sigma$ has order $d$ in $\pi_1(X -\Sigma)$.  If $(m,d)=1$ then $\pi_1(X-\Sigma_{K}(m))$ is isomorphic to $G$.
\end{proposition}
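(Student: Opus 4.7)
The plan is a direct van Kampen computation on the mapping-torus description \eqref{twistrimsurgery2},
\[
X - \Sigma_K(m) \;=\; A \cup_W B,
\]
with $A = X - \Sigma - \Int(S^1 \times B^3)$, $B = S^1 \times_{\tau^m}(B^3 - K_+)$, and common boundary $W = S^1 \times (S^2 - \{2\text{ pts}\})$. Write $\pi_1(W) = \Z^2 = \langle s,\mu\rangle$ with $s = [S^1]$ and $\mu$ a meridian of the arc $I \subset B^3$. In the un-surgered decomposition $X - \Sigma = A \cup_W B_0$ with $B_0 = S^1 \times (B^3 - I)$, the inclusion $W \hookrightarrow B_0$ is a homotopy equivalence, so $\pi_1(A) = \pi_1(X - \Sigma) = G$; under $W \hookrightarrow A$ one has $s \mapsto 1 \in G$ (the standing rim-surgery hypothesis that the pushoff of $\alpha$ is null-homotopic in $X - \Sigma$) and $\mu \mapsto \mu_\Sigma$. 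On the other side $\pi_1(B)$ is the HNN extension
\[
\pi_1(B) \;=\; \langle\, \pi_K,\, t \mid t x t^{-1} = \tau^m_*(x),\ x\in\pi_K\,\rangle,
\]
where $\pi_K := \pi_1(B^3 - K_+)$ coincides with the usual knot group (van Kampen applied to $(S^3,K) = (B^3,K_+)\cup(B^3,K_-)$ together with $B^3 - K_-\simeq S^1$), and $\tau$ acts on $\pi_K$ by the standard meridional-twist formula $\tau_*(x) = \mu_K x \mu_K^{-1}$; under $W \hookrightarrow B$ one has $s\mapsto t$ and $\mu\mapsto\mu_K$.

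Van Kampen then presents $\pi_1(X - \Sigma_K(m))$ by the relations of $G$ and $\pi_K$, the HNN relation $t x t^{-1} = \mu_K^m x \mu_K^{-m}$, and the identifications $t = 1$ and $\mu_K = \mu_\Sigma$. Setting $t = 1$ forces $[\mu_K^m, x] = 1$ for every $x \in \pi_K$, and after $\mu_K = \mu_\Sigma$ this says $\mu_\Sigma^m$ centralizes the image of $\pi_K$ in the amalgamated group. Because $\mu_\Sigma^d = 1$ in $G$ and $\gcd(m,d) = 1$, Bezout gives integers $a,b$ with $am + bd = 1$ and
\[
\mu_\Sigma \;=\; \mu_\Sigma^{am + bd} \;=\; (\mu_\Sigma^m)^a,
\]
so $\mu_\Sigma = \mu_K$ is itself central in the subgroup $\pi_K$. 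Since $\pi_K$ is generated by Wirtinger conjugates $g \mu_K g^{-1}$, and each now equals $\mu_K$, the image of $\pi_K$ collapses to the cyclic subgroup $\langle \mu_\Sigma\rangle \subset G$. Hence $G \hookrightarrow \pi_1(X - \Sigma_K(m))$ is already surjective.

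To promote surjectivity to an isomorphism, I would exhibit an explicit retraction $\Phi\colon \pi_1(X - \Sigma_K(m)) \to G$ defined by $\Phi|_G = \mathrm{id}_G$, $\Phi(t) = 1$, and $\Phi(x) = \mu_\Sigma^{\varepsilon(x)}$ on $\pi_K$, where $\varepsilon\colon \pi_K \twoheadrightarrow \Z \twoheadrightarrow \Z_d$ is the mod-$d$ abelianization; every relation is visibly respected, since both sides of the HNN relation land in the abelian subgroup $\langle \mu_\Sigma\rangle$. The main obstacle is verifying the formula $\tau_*(x) = \mu_K x \mu_K^{-1}$ cleanly (a standard but somewhat technical analysis of the collar-supported meridional twist). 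Once that is in hand, the Bezout step promoting ``$\mu_\Sigma^m$ central in $\pi_K$'' to ``$\mu_\Sigma$ central in $\pi_K$'' is the only nontrivial input.
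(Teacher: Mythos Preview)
Your argument is correct and follows essentially the same route as the paper: the same mapping-torus decomposition, the same van Kampen diagram over $S^1\times(S^2-\{2\ \text{pts}\})$, and the same resulting presentation \eqref{mtwistfundgpeq2}. You have simply made explicit the steps the paper compresses into ``it obviously gives $\pi_1(X-\Sigma)$''---namely the Bezout promotion from $\mu_\Sigma^m$ central to $\mu_\Sigma$ central, the collapse of the Wirtinger generators, and the retraction $\Phi$ certifying that no spurious relations are imposed on $G$.
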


\begin{proof}

In order to investigate a presentation of $\pi_1(X-\Sigma_{K}(m))$, we first consider the decomposition of $X-\Sigma_K(m)$ induced from~\eqref{twistrimsurgery2}:
\begin{equation}\label{cptwistrimsurgery2}
X-\Sigma_K(m)=X-\Sigma-S^1\times (B^3,I)\cup_{\partial} S^1 \times_{\tau^m}(B^3-K_+).
\end{equation}

Choosing a base point $*$ at the intersection of two components in this decomposition~\eqref{cptwistrimsurgery2}, we get the following diagram from the van Kampen theorem;
\begin{equation}\label{vandia}
\xymatrix@C=-2pc{
&\pi_1(X-\Sigma-S^1\times (B^3, I)) \ar[dr]^{j_1}&\\
{\pi_1 (S^1\times(\partial{B^3}-\{\mbox{two points}\}))}  \ar[ur]^{i_1} \ar[dr]^{i_2}
&&  \pi_1(X - \Sigma_{K}(m))\\
& \pi_1(S^1\times_{\tau^m}(B^3-K_+)) \ar[ur]^{j_2}&
}
\end{equation}

In the diagram, each map is obviously induced by an inclusion and $\pi_1 (S^1\times(\partial{B^3}-\{\mbox{two points}
\}))$ is generated by two elements $[S^1]$ and $\mu$. So, the relations in a presentation of $\pi_1(X - \Sigma_{K}(m))$ are given by $i_1[S^1]=\alpha'$ which is trivial by the assumption that the pushoff $\alpha'$ of $\alpha$ is null-homotopic in $X-\Sigma$, and $i_1(\mu)$ is a meridian $\mu_\Sigma$ of $\Sigma$ in $\pi_1(X-\Sigma-S^1\times (B^3,I))\cong\pi_1(X-\Sigma)$. So, it leads the presentation for $\pi_1(X - \Sigma_{K}(m))$ as follows:
\begin{equation}\label{mtwistfundgpeq1}
 \la \pi_1(X-\Sigma) *\pi_1(S^1\times_{\tau^m}
(B^3-K_+)) \mid 1=\delta, \mu_\Sigma=\mu_K \ra ,
\end{equation}
where $\delta=[S^1]$, $\mu_K=[\mu_K]$ denote generators of $\pi_1(S^1\times_{\tau^m}
(B^3-K_+), *)$ in~\figref{fundamentalgp}.
Associated with the relations, this presentation becomes the following:
\begin{equation}\label{mtwistfundgpeq2}
 \la \pi_1(X-\Sigma) *\pi_1(B^3-K_+) \mid \mu_\Sigma=\mu_K, \mu_K^{-m}g\mu_K^{m}=g\  , \forall g\in\pi_1(B^3-K_+) \ra .
\end{equation}
If $(m,d)=1$ and $\mu_{\Sigma}^d=1$, it obviously gives $\pi_1(X-\Sigma)$.
\fig{120}{Figfundamentalgp}{
 \put(-60,110){\footnotesize$\color{red}{*}$}
 \put(-180,65){\footnotesize$\delta=[S^1,*]$}
   \put(-20,120){\footnotesize$\mu_K=[\mu_K,*]$}
\caption{Generators of $\pi_1(S^1\times_{\tau^m}(B^3-K_+), *)$}
\label{F:fundamentalgp}}


\end{proof}

\begin{remark}
Note that in the diagram, if the image of $\pi_1(S^1\times_{\tau^m} (B^3-K_+))$ in $\pi_1(X - \Sigma_{K}(m))$ is a cyclic subgroup generated by the meridian of $\Sigma_{K}(m)$, then the presentation~\eqref{mtwistfundgpeq1} for $ \pi_1(X - \Sigma_{K}(m))$ readily leads to the group $ \pi_1(X - \Sigma)$. The choice of the parameter $m$ in Proposition~\ref{P:twistrimfundagp} makes this case. This property enables one to see that the stabilization result of Baykur-Sunukjian in~\cite{baykur-sunukjian:surfacestabilization} can be extended to knotted surfaces produced by the $m$-twist rim surgery with the choice of $m$ in Proposition~\ref{P:twistrimfundagp}. They showed that for $m=1$ or any $m$ with $(m,d)=1$ in the case of $\pi_1(X-\Sigma)=\Z_d$, the $m$-twist surgered surfaces become smoothly isotopic by adding one trivial handle. The same argument in~\cite[Section 3.2]{baykur-sunukjian:surfacestabilization} can work for the examples in Proposition~\ref{P:twistrimfundagp} by adding a $1$-handle $h$ at a crossing of the knotted arc $K_+$ in $S^1\times_{\tau^m} (B^3, K_+)$ to unknot it crossing by crossing, where it must be checked that the attached handle is trivial at each stage.  It follows from the work of Boyle~\cite[Theorem 2, Section 2]{Boyle}, showing that surfaces obtained by attaching two different $1$-handles to a surface $F$ are equivalent in  $S^4$ up to isotopy if and only if their `double cosets' of the peripheral subgroup $P$, the image of $\pi_1(\p\nu(F))$ under the inclusion $i_*: \pi_1(\p\nu(F))\to  \pi_1(S^4-\nu(F))$, should be same. Here the double coset induced by a $1$-handle $h$ of the peripheral subgroup $P$ is defined by $P[\alpha*h^c*\beta^{-1}]P$, where $h^c$ is a core of the $1$-handle in the exterior of $\nu(F)$ connecting a point $b_0$ to $b_1$ in $\p\nu(F)$ and $\alpha$, $\beta$ are paths in $\p\nu(F)$ starting from the base point of $\pi_1(\p\nu(F))$ while $\alpha$ ends at $b_0$ and $\beta$ ends at $b_1$.  So this makes $[\alpha*h^c*\beta^{-1}]$ an element in $\pi_1(S^4-\nu(F))$, and from his result~\cite[Theorem 2, Section 2]{Boyle} it can be shown that a $1$-handle $h$ on $F$ is trivial if and only if its double coset $P[\alpha*h^c*\beta^{-1}]P=P$; see Corollary 3~\cite[Section 3]{Boyle}. Note that this works for general $4$-manifold as shown in~\cite[Lemma 3]{baykur-sunukjian:surfacestabilization}. In our circumstance, the image of $\pi_1(S^1\times_{\tau^m} (B^3-K_+))$ in $\pi_1(X - \Sigma_{K}(m))$ is a cyclic subgroup generated by the meridian of $\Sigma_{K}(m)$ so that every handle $h$ attached within $S^1\times_{\tau^m} B^3$ is homotopic to a handle attached along some number of meridians to the surface $\Sigma_{K}(m)$. In other words, the core $h^c$ of every attached handle can be isotoped to lie in $\p\nu(\Sigma_{K}(m))$, and so such cores represent a trivial handle as shown in~\cite[Corollary 3, Section 3]{Boyle} or~\cite[Lemma 3]{baykur-sunukjian:surfacestabilization}. 

\end{remark}

\subsection{Annulus rim surgery}\label{S:annulusrimsurgery} Suppose that there is a
smoothly embedded annulus $M(\cong S^1\times I$, where $I$ denotes an interval $[-1,1]$) in $X$ such that
$M$ meets $\Sigma$ normally along $\partial M$ so that $M\cap \Sigma=\partial M$ are two curves $\alpha_{-1}$ and $\alpha_1$ on $\Sigma$. We assume that $\Sigma-\{\alpha_{-1}, \alpha_1\}$ is connected. Choose a trivialization $\nu(M)\to (S^1\times I)\times D^2\cong S^1\times B^3$ such that $M\cong (S^1\times I)\times \{0\}$ and $\nu(M)|_{\Sigma}\cong S^1\times f$, where $f$ denotes a disjoint union of two unknotted segments $\p I\times I\subset I\times D^2=B^3$, a part of the boundary of a trivially embedded band $b=I\times I$ in $B^3$ (See~\figref{finashinsurgery}). So, $M$ is identified with $S^1\times I\times \{0\}$ in $S^1\times b\subset S^1\times B^3$. 

\fig{140}{Figfinashinsurgery}{
  \put(-385,43){\footnotesize$\color{red}I$}
   \put(-350,35){\footnotesize$\color{blue}{\alpha_{1}}$}
  \put(-230,35){\footnotesize$\color{red}I$}
   \put(-255,30){\footnotesize$\color{blue}{\alpha_{-1}}$}
 \put(-150,43){\footnotesize$M$}
  \put(-10,100){\footnotesize$m_b$} 
 \put(-50,0){\footnotesize$(B^3,f)$} 
  \put(-13,50){\footnotesize$\color{red}b$}
  \put(-300,120){\footnotesize$M$}
  \put(-430,20){\footnotesize$\Sigma$}
\caption{$T=S^1\times m_b\subset S^1\times (B^3,f)\cong\nu(M)$ }
\label{F:finashinsurgery}}

Denote by $m_b$ a meridian of $b$ in $B^3$ and let $T$ be a torus in $\nu(M)$ corresponding to $S^1\times m_b\subset S^1\times (B^3, f)$. Knot surgery along this torus $T$ produces a new surface $\Sigma_K(\varphi)$. The simplest gluing $\varphi:\p\nu(T)\to S^1\times \p E(K)$, given by 
 $[S^1]\mapsto [S^1]$, $m_b\mapsto \mu_K$, and $\mu_T\mapsto \lambda_K$, provides the Finashin's annulus rim surgery. This operation obviously yields a band $b_K\subset B^3$ by knotting the band $b$ along $K$ and let $f_K$ be the pair of arcs bounding $b_K$. Here the framing of $b_K$ is chosen the same as the framing of $b$. So the resulting manifold of the annulus rim surgery performed on $\nu(M)\cong S^1\times (B^3,f)$ becomes $S^1\times (B^3,f_K)$ and we write a new pair as follows:
\begin{equation}\label{finashin} 
(X,\Sigma_{K}(\varphi))=(X,\Sigma)-S^1\times (B^3,f)\cup S^1\times (B^3,f_K)
\end{equation}
This construction can be further modified by twists along a meridian and a longitude of $K$, but we will stick to Finashin's construction; see~\cite{kim-danny:topotriviality} for other modifications. Note that when $\pi_1(X-\Sigma)=\Z_d$, any (twisted or otherwise) annulus rim surgery preserves surface knot groups~\cite{finashin},~\cite[Proposition 3.3]{kim-danny:topotriviality}.


\section{Basic Construction}\label{S:basicconstruction}
In order to get our main theorems,  for two knots $K$ and $K'$ related by a single crossing change we will show the $1$-stable equivalence on 
surface knots $\Sigma_{K}(\varphi)$ and $\Sigma_{K'} (\varphi)$ produced by knot surgery.
The complete proof for each knotting construction will be given in~\secref{stabtwistrimsurgeryonecrossinng}, ~\ref{S:stabrimsurgeryonecrossing}, ~\ref{S:stabannulusrimsurgeryonecrossing} but in this section we first present the key constructions and properties that will be used repeatedly in the proofs of our stabilization results.

\fig{80}{FigDehntwist}{
  \put(-240,25){\footnotesize$\color{red}c$}
  \put(-180,25){\footnotesize$\color{red}\pm{1}$}
  \put(-60,40){\footnotesize$K'$}
  \put(-270,40){\footnotesize$K$}
\caption{$\pm{1}$-Dehn surgery along $c$ at a crossing}
\label{F:Dehntwist}}

Suppose that two knots $K$, $K'$ in $S^3$ differ by a single crossing change, so that the knot $K'$ is obtained by performing a $\pm{1}$-Dehn surgery along a curve $c$ around an oppositely oriented crossing of $K$ as in~\figref{Dehntwist}. Let $(X_{K}(\varphi), \Sigma_{K}(\varphi))$, $(X_{K'}(\varphi), \Sigma_{K'}(\varphi))$ be two pairs obtained by a knot surgery along a torus $T\subset X-\Sigma$ and gluing map $\phi$ along the knots $K$ and $K'$ respectively. Then we begin by showing that these pairs are related by a torus surgery:

\begin{lemma}\label{L:log}
A log transform of multiplicity $\pm{1}$ performed on the pair $(X_{K}(\varphi),\Sigma_{K}(\varphi))$ produces $(X_{K'}(\varphi),\Sigma_{K'}(\varphi))$.
\end{lemma}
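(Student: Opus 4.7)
The plan is to locate a distinguished torus $T_0$ in the complement of $\Sigma_{K}(\varphi)$ and exhibit the pair $(X_{K'}(\varphi), \Sigma_{K'}(\varphi))$ as the output of a $\pm 1$-log transform on $T_0$. The natural candidate is $T_0 = S^1 \times c$, where $c$ is the crossing circle at which $K$ and $K'$ differ, i.e.\ an unknot in $S^3 \setminus K$ bounding a small disk that meets $K$ transversely in two oppositely oriented points. It is classical that a $(\pm 1)$-framed Dehn surgery on $c$ produces $S^3$ again, now equipped with the modified knot $K'$, and acts as the identity on $\partial E(K) = \partial E(K')$.

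In each of the three surgery constructions recalled in Section~\ref{S:prelim} — rim, twist rim, and annulus rim — the pair $(X_{K}(\varphi), \Sigma_{K}(\varphi))$ contains a product or twisted-product region modelled on $S^1 \times (B^3, \cdot)$, where the second factor encodes the knot data (the half-knot $K_+$ in the rim and twist rim cases, and the arcs $f_K$ spanning a knotted band in the annulus rim case). Choosing $c$ in the interior of $B^3$ and disjoint from the knotted arc or band, the torus $T_0 = S^1 \times c$ lies entirely in $X_{K}(\varphi) \setminus \Sigma_{K}(\varphi)$ with the canonical product framing. Because the twist $\tau^m$ in the twist rim case is supported in a collar of $\partial B^3$, the same choice of $c$ goes through verbatim in $S^1 \times_{\tau^m}(B^3, K_+)$.

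The $\pm 1$-log transform on $T_0$ removes $\nu(T_0) \cong T^2 \times D^2$ and reglues so that the meridian of $T_0$ is sent to $\mu_{T_0} \pm \lambda_{T_0}$, where $\lambda_{T_0}$ is the $\{*\} \times c$ factor. Under the product identification this is precisely $S^1$ times a $(\pm 1)$-framed Dehn surgery on $c \subset S^3$; since that surgery converts $(S^3, K)$ into $(S^3, K')$ by a diffeomorphism fixing $\partial E(K)$, its product with $S^1$ converts the piece $S^1 \times E(K)$ (or its twisted analogue, respectively the $S^1 \times (B^3, f_K)$ block) into the corresponding $K'$-version, leaving the gluing $\varphi \colon \partial \nu(T) \to S^1 \times \partial E(K)$ untouched. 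Reassembling gives exactly $(X_{K'}(\varphi), \Sigma_{K'}(\varphi))$, which is the content of the lemma.

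The main bookkeeping step — and the place where the argument could go wrong — is verifying that the surgery framing of $T_0$ used in the log transform coincides with the product framing from $S^1 \times c$, so that the log-transform coefficient $\pm 1$ really matches the Dehn-surgery coefficient $\pm 1$ on $c$. This amounts to tracking the normal bundle trivialization of $T_0$ through each of the three constructions; since in every case the relevant product (or twisted-product) region comes with a canonical trivialization and $c$ is drawn in its interior, the verification is a direct unwinding of definitions rather than a genuine obstacle. It is precisely this uniformity, however, that lets the lemma be stated as a single assertion applicable to rim, twist rim, and annulus rim surgery simultaneously, and that makes it the correct bridge from a crossing change of knots to a torus surgery on the ambient pair.
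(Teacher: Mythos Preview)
Your proof is correct and follows essentially the same route as the paper: identify the torus $S^1 \times c$ in the knot-surgery piece and observe that the $\pm 1$-log transform on it is $S^1$ times the $\pm 1$-Dehn surgery on $c$, converting $(S^3,K)$ to $(S^3,K')$ rel boundary so that the gluing $\varphi$ is undisturbed. The only difference is that the paper argues directly in the general decomposition $(X,\Sigma)-\nu(T)\cup_\varphi S^1\times E(K)$, which avoids your case-by-case discussion of the rim, twist rim, and annulus rim blocks (and in particular the support of $\tau^m$) and also covers the general cyclic knot surgery used later in \thmref{mainthmE}.
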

\begin{proof}

We first recall that the knot surgered pair is defined as follows:
$$
(X_{K}(\varphi),\Sigma_{K}(\varphi))=(X,\Sigma)-\nu(T)\cup_{\phi}S^1\times E(K).
$$

Here, we denote $T_{c}$ a torus  $S^1\times c$ in $S^1\times E(K)$ of this decomposition, where $c$ is a curve at a crossing of $K$ as in \figref{Dehntwist}. Identify a neighborhood $\nu(T_c)$ with $S^1\times (c\times D^2)$, where $c\times D^2$ is a neighborhood of $c$ in $E(K)$, and we perform the $\pm{1}$-log transform parallel to the curve $c$ on $T_{c}$ in $(X_{K}(\varphi),\Sigma_{K}(\varphi))$ which is given by the identity in the $S^1$ direction times the $\pm{1}$-Dehn surgery along $c\subset E(K)$. Note that this construction realizes performing a torus surgery along $T_c$ in $S^1\times E(K)$ and gluing back this manifold in $(X,\Sigma)-\nu(T)$ along their boundaries via $\phi$. The resulting manifold is easily identified with $(X_{K'}(\varphi),\Sigma_{K'}(\varphi))$ because there is an obvious diffeomorphism from the log transform of $(S^1\times E(K))$ along $T_c$, denoted by $(S^1\times E(K))_{T_c}$, to $S^1\times E(K')$ which carries each element in a basis $\{[S^1],\mu_{K}, \lambda_{K}\}$ of $H_1(\p (S^1\times E(K))_{T_c})$ to each element in $\{[S^1],\mu_{K'}, \lambda_{K'}\}$ of $H_1(S^1\times \p E(K'))$ respectively.

\end{proof}
\begin{remark}
When the knot surgery $(X,\Sigma)\to (X,\Sigma_{K}(\varphi))$ is an ambient surgery such as (twisted) rim surgery and annulus rim surgery, the above torus surgery on $(X,\Sigma_{K}(\varphi))$ gives rise a new embedding $\Sigma_{K'}(\varphi)$ in $X$. And, because $\pi_1(X_{K'}(\varphi)-\Sigma_{K'}(\varphi))\cong \pi_1(X_{K}(\varphi)-\Sigma_{K}(\varphi))$ observed from the proof in~\lemref{log}, if a knot surgery $(X,\Sigma)\to (X,\Sigma_{K}(\varphi))$ does not change the surface knot group $\pi_1(X-\Sigma)$ under some suitable circumstances then the new embedding $\Sigma_{K'}(\varphi)$ also preserves its knot group.
\end{remark}

\subsection{\bf Fiber sum and gluing map}\label{S:fibersum}
A torus surgery of $(X_{K}(\varphi),\Sigma_{K}(\varphi))$ along $T_c$ can be described as a {\em fiber sum} of $(X_{K}(\varphi),\Sigma_{K}(\varphi))$ and $S^1\times S^3$: Let $T_u$ be a standardly embedded torus $S^1\times u$ in $S^1\times S^3$, where $u$ is an unknot in $S^3$. Then we write the torus surgered manifold in~\lemref{log} as a fiber sum along tori $T_c$ and $T_u$: 
\begin{equation}\label{fibersum}
(X_{K'}(\varphi),\Sigma_{K'}(\varphi))\cong (X_{K}(\varphi),\Sigma_{K}(\varphi))\fcs{T_{c}}{T_u}S^1\times S^3.
\end{equation}

In order to describe the gluing map $f:\p\nu(T_{u})\to \p \nu(T_{c})$ carefully, identify a tubular neighborhood $\nu(T_{u})$ in $S^1\times S^3$ with $S^1\times (u\times D^2)$, where $u\times D^2$ is a neighborhood $\nu(u)$ of $u$ in $S^3$. Let $a=S^1\times \{\text{pt}\}\subset S^1\times u=T_u$ and $a'$ denotes its pushoff into $\p\nu(T_u)$. Then $\{a', m_u, l_u\}$ forms a basis for $H_1(\p\nu(T_{u}))$ where $m_u$, $l_u$ are a meridian-longitude pair of $u$ with respect to the identification of $\nu(u)$. Similarly, under an identification $\nu(T_{c})\cong S^1\times (c\times D^2)$ in $S^1\times E(K)$, let $\gamma=S^1\times \{\text{pt}\}\subset S^1\times c=T_c$ and $m_{c}$, $l_{c}$ be a meridian-longitude pair of $c$. So $\{\gamma', m_{c}, l_{c}\}$ gives a basis for $H_1(\p\nu(T_{c}))$, where $\gamma'$ is a pushoff of $\gamma$ into $\p\nu(T_{c})$.

Described in the proof of \lemref{log}, this construction realizes the product of a $\pm 1$-Dehn surgery with $S^1$, and note that the meridian $m_u$ (longitude $l_u$) of $u$ is the longitude (meridian) of the solid torus $S^3-\nu(u)$ that is glued into $E(K)-\nu(c)$. So the gluing map $f: S^1\times (u\times \p D^2)\to S^1\times (c\times \p D^2)$ is determined as follows;
\begin{equation}\label{f:gluing}
f_*(a')=\gamma',  \quad     f_*(m_u)=m_{c} , \quad \text{and}  \quad   f_*(l_u)=\pm  m_{c}+l_{c}.
\end{equation}

In our present purpose, it is important to keep track of the gluing map in this fiber sum, from which we can determine the framing arising in our proof of the stabilization result.
\subsection{\bf Cobordism}\label{S:cobordism}
In this section, we will construct a cobordism $W$ whose upper boundary is $(X_{K}(\varphi),\Sigma_{K}(\varphi))\fcs{T_{c}}{T_u}S^1\times S^3$ from $(X_{K}(\varphi),\Sigma_{K}(\varphi))\sqcup S^1\times S^3$. The proof of stable equivalence for surfaces $\Sigma_{K}(\varphi)$ and $\Sigma_{K'}(\varphi)$ will come from the middle level of the constructed cobordism $W$. 

Given $(X_{K}(\varphi),\Sigma_{K}(\varphi))$ and $S^1\times S^3$, containing tori $T_{c}$ and $T_{u}$  respectively, we obtain $W$ by forming $((X_{K}(\varphi),\Sigma_{K}(\varphi))\sqcup S^1\times S^3)\times I$, and attaching a `doubly round $1$-handle' $[-1, 1]\times T^2\times D^2$ to the upper boundary. In particular, $\{-1\}\times T^2\times D^2$ is attached to $\nu(T_u)$ while $\{1\}\times T^2\times D^2$ is glued to $\nu(T_c)$; the attaching map in the first case is the `identity' with respect to some identification of $\nu(T_u)$, while the second torus is attached by a diffeomorphism that should realize the gluing map $f$ described in~\eqref{f:gluing} when restricted to the boundary. From this description, it follows that in the upper boundary of W the meridian $m_u$ of $T_u $ is identified with the meridian $m_c$ of $T_c$ because the attaching map $\{1\} \times T^2\times D^2\to\nu(T_c)$ necessarily preserves the normal disk.

\fig{140}{FigTorihandles}{
  \put(-240,4){\footnotesize$h^0$}
  \put(-238,25){\footnotesize$h_{\gamma}^1$}
  \put(-283,60){\footnotesize$h_{c}^1$}
   \put(-240,110){\footnotesize$h^2$}
  \put(-55,4){\footnotesize$h^0$}
    \put(-105,60){\footnotesize$h_{u}^1$}
  \put(-60,25){\footnotesize$h_{a}^1$}
   \put(-60,110){\footnotesize$h^2$}
\caption{Handle decompositions of $T_c$ and $T_u$}
\label{F:Torihandles}}

We also explicitly give a handle by handle description of $W$ for later work according to a standard handle structure of tori $T_{c}$ and $T_u$ and the gluing between them. 
As in~\figref{Torihandles} (thicken by $D^2$), a standard handle decomposition of $\nu(T_{c})=T_{c}\times D^2$ can be given by one $0$-handle $h^0$, two $1$-handles $h_{\gamma}^1$, $h_{c}^1$, and one $2$-handle $h^2$, where $h_{\gamma}^1$ and  $h_{c}^1$ denote the $1$-handles induced by the first and second factors of $T_c=S^1\times c$ respectively. 
Similarly, $\nu(T_u)=T_u\times D^2$ has one $0$-handle $h^0$, two $1$-handles $h_{a}^1$, $h_{u}^1$ generated by the first and second factors of $T_u=S^1\times u$, and one $2$-handle $h^2$. From the handles of these tori, $W$ will be built by adding one $5$-dimensional $1$-handle $H^1$, two $2$-handles denoted by $H_{\gamma a}^2$, $H_{c u}^2$, and one $3$-handle $H^3$ to $((X_{K}(\varphi),\Sigma_{K}(\varphi))\sqcup S^1\times S^3)\times I$.

To examine this attaching process closely which is basically same as the previous description, note that the handle structure of a neighborhood of a torus simply comes from each $2$-dimensional $k$-handle $h^k$ of the torus so that the corresponding $4$-dimensional $k$-handle is of the form $h^k\times D^2=(D^k\times D^{2-k})\times D^2$.  Then we define a $5$-dimensional $(k+1)$-handle $H^{k+1}$ by $(I\times D^k)\times D^{2-k}\times D^2$ where $I$ denotes the interval $[-1,1]$, and the attaching process of a $(k+1)$-handle $H^{k+1}$ is described as follows. The discs $(\{-1\}\times D^k)\times 0$ and $(\{1\}\times D^k)\times 0\subset \p (I\times D^k)\times 0$ are attached to each core of the $k$-handles of $T_u$ and $T_c$ respectively, and the rest $(I\times \p D^k)\times 0$ connects the boundaries of these cores in $\p_{+} W_{k}$, where $W_k$ denotes a handlebody obtained by attaching all handles of index $\le k$. Moreover, as described before, $\{-1\}\times D^k\times D^{2-k}\times 0$ and $\{1\}\times D^k\times D^{2-k}\times 0$ are glued to each $2$-dimensional $k$-handle $h^k$ of $T_u$ and $T_c$ respectively. The boundary of the normal bundle of the torus restricted over the handle $\{-1\}\times D^k\times D^{2-k}\times \p D^2$ is glued to $\p\nu(T_u)$ by the identity, and $\{1\}\times D^k\times D^{2-k}\times \p D^2$ is glued to $\p\nu(T_c)$ by the diffeomorphism $f$ so that it gives rise to the framing.

\vskip6pt
 
We will find out the resulting upper boundary at each stage of attaching handles while we're building a cobordism $W$ from $(X_{K}(\phi),\Sigma_{K}(\phi))\sqcup S^1\times S^3$ to the fiber sum $(X_{K}(\phi),\Sigma_{K}(\phi))\fcs{T_{c}}{T_u}S^1\times S^3$. The level of $W$ after a $1$-handle $H^1$ is obviously the connected sum $(X_{K}(\phi),\Sigma_{K}(\phi))\cs S^1\times S^3$, and for the rest handles we will give more careful arguments. 

\vskip6pt
Since we're interested in the equivalence of embeddings $\Sigma_{K}(\varphi)$ and $\Sigma_{K'}(\varphi)$ in a same manifold $X$, one may focus on ambient surgery so that $X_{K}(\varphi)\cong X$. Then all constructions of (twisted) rim surgery and annulus rim surgery will share the following diagram which indicates the boundary at each stage of adding handles:
\begin{equation}\label{diagram}
\begin{split}
  (X,  &\Sigma_{K}(\phi))\sqcup  S^1\times S^3\xrightarrow{H^1}
  (X\cs S^1\times S^3,\Sigma_{K}(\phi))\xrightarrow{H_{\gamma a}^2}
  (X\cs S^4,\Sigma_{K}(\phi))\xrightarrow{H_{cu}^2}\\
  &(X\cs\sst,\Sigma_{K}(\phi))\xrightarrow{H^3}
  (X,\Sigma_{K}(\phi))\fcs{T_{c}}{T_u}S^1\times S^3\cong (X,\Sigma_{K'}(\phi)).
\end{split}  
\end{equation}

Our main argument for $1$-stable equivalence of $\Sigma_{K}(\varphi)$ and $\Sigma_{K'}(\varphi)$ follows from the middle level of $W$:
Let $W_k$ be a handlebody obtained by attaching all handles of index $\le k$ in $W$. Then from the diagram~\eqref{diagram}, $\p _{+}W_2$ is $(X\cs\sst,\Sigma_{K}(\phi))$, which will be shown in Lemma~\ref{L:S2bundleS2}, Theorem~\ref{T:onecrosssdiffeorim}, and~\ref{T:onecrosssdiffeoannuls}. After adding a $3$-handle to $W_2$, we would have a cobordism $W$ from $(X,\Sigma_{K}(\phi))\sqcup S^1\times S^3$ to the fiber sum which is diffeomorphic to $X$ itself containing the surface $\Sigma_{K'}(\phi)$ by~\lemref{log} and~\eqref{fibersum}. Turning the $3$-handle $H^3$ upside down so that it becomes to attach a $2$-handle $H^2_*$ to the fiber sum gives a connected sum with $\sss$ or $\sst$ on $X$. Disregarding the surface $\Sigma_{K'}(\phi)$ in the fiber sum, we first note that the level after attaching the $2$-handle $H^2_*$ to $X$ is same as $\p_{+}W_2$ which is diffeomorphic to $X\cs\sst$. But since we're building a relative cobordism, it has to be argued that attaching the $2$-handle $H^2_*$ gives rise to {\em the pair} $(X\cs\sst,\Sigma_{K'}(\varphi))$ on the boundary. This will be verified in Lemma~\ref{L:dual3handle}, Theorem~\ref{T:onecrosssdiffeorim}, and~\ref{T:onecrosssdiffeoannuls} so that it will prove the $1$-stable equivalence of $\Sigma_{K}(\varphi)$ and $\Sigma_{K'}(\varphi)$.

\vskip6pt
When we discuss about the stabilization in~\secref{stabcylicknotsurgery} for the case that knot surgery $(X,\Sigma)\to (X_{K}(\varphi),\Sigma_{K}(\varphi))$ is cyclic, that is a surgery preserving $\pi_1(X_{K}(\varphi)-\Sigma_{K}(\varphi))\cong\pi_1(X-\Sigma)$ as a cyclic group, the cobordism $W$ will be considered to be from $(X_{K}(\varphi),\Sigma_{K}(\varphi))\sqcup S^1\times S^3$ and it will be shown that the diagram~\eqref{diagram} also works for this. 

\vskip6pt
In the following subsections, we will investigate the level of $W$ at each step of adding handles, and give some assertions that will be used in the proof of the stabilization for each knotting construction. For the purpose in this article, the knot surgery $(X,\Sigma)\to (X_{K}(\varphi),\Sigma_{K}(\varphi))$ is assumed to be cyclic or be an ambient surgery to produce a new surface $\Sigma_{K}(\varphi)$ in $X$ throughout the rest of paper, although some proofs may work for more general cases.   

\vskip10pt

\subsubsection{\bf Attaching a $2$-handle $H_{\gamma a}^2$ }\label{2handlesec1}

Note that the homotopy class $\gamma a$ of attaching circle of $2$-handle $H_{\gamma a}^2$ is represented by a curve $\gamma+a$ in the outer boundary of $((X_{K}(\varphi),\Sigma_{K}(\varphi))\sqcup S^1\times S^3)\times I\cup H^1$ as depicted in~\figref{onehandleisotopy1}. We claim that the resulting manifold on the boundary is $(X_K(\varphi)\cs S^4,\Sigma_K(\varphi))$. 
\fig{120}{Figonehandleisotopy1}{
  \put(-350,30){\footnotesize$(X_K(\varphi),\Sigma_K(\varphi))\times\{1\}$}
    \put(-280,95){\footnotesize$\color{blue}{\Sigma_K(\varphi)}$}
   \put(-180,90){\footnotesize$H^1$}
 \put(-112,20){\footnotesize$\color{red}\gamma+a$}
   \put(-10,110){\footnotesize$S_a^1\times S^3\times\{1\}$}
\caption{Attached $1$-handle $H^1$ to $((X_K(\varphi),\Sigma_K(\varphi))\sqcup S^1\times S^3)\times\{1\}$}
\label{F:onehandleisotopy1}}

\begin{lemma}\label{L:2handle1} 
The resulting upper boundary of attaching a $2$-handle $H_{\gamma a}^2$ to $W_1$ is diffeomorphic to $(X_K(\varphi)\cs S^4,\Sigma_K(\varphi))$. 
\end{lemma}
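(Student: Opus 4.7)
The plan is to identify the attaching circle and framing of $H^2_{\gamma a}$ on $\partial_+ W_1 = (X_K(\varphi)\cs S^1\times S^3, \Sigma_K(\varphi))$, and then show that the $4$-dimensional surgery effected by this $2$-handle converts the $S^1\times S^3$ summand into $S^4$ without disturbing the surface. First I would trace the attaching circle: following the handle decomposition of the doubly round $1$-handle $[-1,1]\times T^2\times D^2$, the handle $H^2_{\gamma a}=D^2\times D^3$ arises as the product of $[-1,1]$ with the $1$-handle of $T^2$ in the common $S^1$-direction, so its attaching circle is a loop consisting of a segment along $\gamma\subset T_c$, a passage through the upper boundary of $H^1$, a segment along $a\subset T_u$, and a return arc through $H^1$. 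Homotopically, this loop is the connect-sum $\gamma\# a$ in $X_K(\varphi)\cs S^1\times S^3$.

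Since $X$ is simply connected and the knot surgery is cyclic or ambient, $X_K(\varphi)$ is also simply connected, and hence $\gamma\subset X_K(\varphi)$ bounds an embedded $2$-disk $D$. By general position in a $4$-manifold I can arrange $D$ disjoint from the $2$-dimensional $\Sigma_K(\varphi)$, and using $D$ I would ambient isotope $\gamma\# a$ inside $(X_K(\varphi)\cs S^1\times S^3)\setminus\Sigma_K(\varphi)$ to the standard generator $a=S^1\times\{\mathrm{pt}\}$ of $\pi_1(S^1\times S^3)$ supported in the $S^1\times S^3$ summand. Because $D$ has trivial normal bundle (being a disk in a $4$-manifold), this isotopy transports the normal framing faithfully.

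For the framing, $H^2_{\gamma a}$ inherits its framing from the product structure of the doubly round $1$-handle: on the $T_u$-side the attaching is by the identity, so the framing on the $a$-arc matches the product framing of $\nu(T_u)$, which is exactly the $0$-framing on $a\subset S^1\times S^3$. The gluing $f$ of \eqref{f:gluing} on the $T_c$-side is chosen to send $a'\mapsto\gamma'$ and $m_u\mapsto m_c$, so it preserves the product framing across the cobordism and contributes no additional twist. The $0$-framed surgery on $a$ in $S^1\times S^3$ cancels the unique $1$-handle in the standard handle decomposition of $S^1\times D^4$, producing $D^5$ and hence boundary $S^4$; since the entire surgery region lies disjoint from $\Sigma_K(\varphi)$ (as $T_c$, $H^1$, and $T_u$ all sit in the exterior of the surface), the resulting pair is $(X_K(\varphi)\cs S^4, \Sigma_K(\varphi))$.

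The main obstacle is verifying that the framing arising from the gluing $f$ reduces precisely to the $0$-framing on $a$ after the isotopy. The equations in \eqref{f:gluing} give a direct way to track this, and the triviality of the normal bundle of $D$ combined with the way $f$ respects the $S^1$-direction and the normal disk of each torus ensures that no twist sneaks in. A secondary point that must be checked is that the bounding disk $D$ for $\gamma$ can actually be made disjoint from $\Sigma_K(\varphi)$, which uses the fact that $\gamma$ lies in the exterior $S^1\times E(K)\subset X_K(\varphi)\setminus\Sigma_K(\varphi)$ together with a transversality argument in the $4$-manifold.
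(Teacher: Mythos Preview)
Your argument has a genuine gap at the step where you claim to make the disk $D$ disjoint from $\Sigma_K(\varphi)$ by general position. In a $4$-manifold, a $2$-disk and a closed $2$-surface have complementary dimensions, so transversality produces isolated intersection \emph{points}, not disjointness. Knowing that $\partial D = \gamma$ lies in $S^1\times E(K)\subset X_K(\varphi)\setminus\Sigma_K(\varphi)$ does nothing for the interior of $D$. To push $D$ off the surface you would need $\gamma$ to be null-homotopic in $X_K(\varphi)-\Sigma_K(\varphi)$, which is an additional hypothesis you have not established (and which is not obvious for general cyclic knot surgery, one of the cases the lemma must cover).

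The paper sidesteps this problem entirely. Rather than trying to cap $\gamma$ with a disk, it views $W_1$ as $(X_K(\varphi),\Sigma_K(\varphi))\times I$ with a round handle $R=S^1_+\times S^3\times I$ attached, and then isotopes the attaching foot of $R$ along the curve $\gamma$. Because $\gamma$ itself lies in $S^1\times E(K)$, which sits in the complement of $\Sigma_K(\varphi)$, dragging the foot of the round handle along $\gamma$ is an ambient isotopy supported away from the surface; it absorbs the $\gamma$-portion of the attaching circle into the round handle and leaves only $a$. No spanning disk is needed, and no $\pi_1$ computation in the surface complement is required. This is exactly the piece your approach is missing.
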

\begin{proof}
We first draw $((X_K(\varphi),\Sigma_{K}(\varphi))\sqcup S^1\times S^3)\times I\cup H^1$ as \figref{onehandleisotopy3}, which is basically obtained by attaching a `round handle' $R:=S_+^1\times S^3\times I$, where $S_{+}^1$ denotes a $1$-handle of $S^1$, to the outer boundary $(X_K(\varphi),\Sigma_{K}(\varphi))\times\{1\}$ along $\partial{S_+^1}\times S^3\times I$.
One can see that $\gamma+a$ is isotoped in $X_{K}(\varphi)\cs S^1\times S^3-\Sigma_{K}(\varphi)$ to $a$ as demonstrated in~\figref{onehandleisotopy4} so that it yields a pairwise diffeomorphism $(X_{K}(\varphi)\cs S^1\times S^3,\Sigma_{K}(\varphi), \gamma+a)\to  (X_{K}(\varphi)\cs S^1\times S^3,\Sigma_{K}(\varphi), a)$. 
Since attaching a $2$-handle along $\gamma+a$ gives the effect on the boundary that surgers out the curve $a=S^1\times \text{pt}$ of the second summand $S^1\times S^3$ in $X_K(\varphi)\cs S^1\times S^3$, so the result follows. 

\fig{150}{Figonehandleisotopy3}{
\put(-180,20){\footnotesize$(X_K(\varphi),\Sigma_K(\varphi))\times\{1\}$}
\put(-430,50){\footnotesize$\color{red}\gamma+a$}
\put(-130,70){\footnotesize$\color{blue}{\Sigma_K(\varphi)}$}
\put(-50,150){\footnotesize$\text{pt}\times S^3\times I\subset \partial{S_+^1}\times S^3\times I$}   
\put(-50,25){\footnotesize$R$}   
\put(-140,110){\footnotesize$\color{red}\gamma+a$}
\caption{$ ((X_K(\varphi), \Sigma_{K}(\varphi))\sqcup S^1\times S^3)\times I\cup H^1$}
\label{F:onehandleisotopy3}}

\fig{130}{Figonehandleisotopy4}{
\put(-320,75){\footnotesize$\color{blue}{\Sigma_K(\varphi)}$}
\put(-130,100){\footnotesize$X_K(\varphi)$}
\put(-380,100){\footnotesize$X_K(\varphi)$}   
\put(-370,40){\footnotesize$\color{red}\gamma+a$}
\put(-100,50){\footnotesize$\color{red}\gamma+a$}
\put(-90,-10){\footnotesize$R$}
\put(-270,10){\footnotesize$R$}
\put(-60,80){\footnotesize$\color{blue}{\Sigma_K(\varphi)}$}
\caption{Isotopy of round handle $R$}
\label{F:onehandleisotopy4}}
\end{proof}

\subsubsection{\bf Attaching a $2$-handle $H_{cu}^2$ and a dual handle $H_{*}^2$ of $3$-handle $H^3$}\label{S:23handlesec}
We now deal with the next $2$-handle $H_{cu}^2$ and the dual $2$-handle $H_{*}^2$ of $3$-handle $H^3$ in building $W$. The level of our {\em relative} cobordism $W$ after adding those handles will have more subtle issues on the framing that will depend on each knotting construction, and so the details of the analysis for the boundary will be referred to the next following sections. But here we will first focus on the ambient manifold to study the boundary after adding the handles without concerning surfaces. 

As seen in Lemma~\ref{L:2handle1}, the level of $W$ after adding a $2$-handle $H_{\gamma a}^2$ is diffeomorphic to $X_K(\varphi)\cs S^4\cong X_K(\varphi)$. Note that any knot surgery $(X,\Sigma)\to (X_{K}(\varphi),\Sigma_{K}(\varphi))$ preserves the fundamental group of the ambient manifold, so $X_{K}(\varphi)$ is simply-connected and adding another $2$-handle $H_{cu}^2$ along the curve $c+u$ in $X_K(\varphi)\cs S^4$ gives rise a connected sum with a $S^2$-bundle over $S^2$ on $X_K(\varphi)$. The following lemma determines the framing.

\begin{lemma}\label{L:2handlesurgeryframing}
Attaching a $2$-handle $H_{cu}^2$ to $X_K(\varphi)$ provides a connected sum with the twisted $S^2$-bundle over $S^2$ so that $\p_+ W_2$ is diffeomorphic to $X_K(\varphi)\cs \sst$.  
\end{lemma}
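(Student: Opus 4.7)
The plan is to show that, after identifying $\partial_+(W_1\cup H_{\gamma a}^2)$ with $X_K(\varphi)\cs S^4\cong X_K(\varphi)$ via Lemma~\ref{L:2handle1}, the $2$-handle $H_{cu}^2$ is attached along an unknotted circle with odd framing, from which the conclusion $\partial_+W_2\cong X_K(\varphi)\cs\sst$ follows immediately.

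First I would verify that the attaching circle, homotopically $c+u$, is an unknot in $X_K(\varphi)\cs S^4$. In $S^1\times S^3$ the circle $u$ is unknotted in an $\{\mathrm{pt}\}\times S^3$ slice and bounds an embedded disk $D_u$. This disk survives the surgery performed by $H_{\gamma a}^2$ (which surgers out the $a$-factor, converting the second summand $S^1\times S^3$ into $S^4$), so $u$ remains an unknot in the $S^4$-summand. The band sum $c+u$ through the $1$-handle $H^1$ can then be isotoped across $D_u$ so that it becomes isotopic in $X_K(\varphi)\cs S^4$ to the unknot $u$ itself, and bounds an embedded disk there.

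Second, I would compute the framing of $c+u$ using the gluing map $f$. The five-dimensional $2$-handle $H_{cu}^2=(I\times D^1)\times D^1\times D^2$ has, as its last factor, the normal disk common to the tubular neighborhoods of $T_c$ and $T_u$. At the $T_u$-end this normal disk is attached via the identity so that its boundary is $m_u$, while at the $T_c$-end the gluing~\eqref{f:gluing} sends $l_u$ to $\pm m_c+l_c$. When one transports the product framing from the $T_u$ side across the handle using the $I\times D^1\times D^1$ direction, it disagrees with the naive product framing on the $T_c$ side by the $\pm m_c$ summand. Since $m_c$ represents the nontrivial element of $\pi_1(SO(3))\cong\Z_2$ acting on the normal disk bundle of the attaching circle, this $\pm1$ translates into an odd framing of the unknot $c+u$ relative to the zero framing induced by the Seifert disk described above.

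Finally, since attaching a $2$-handle to a simply-connected $4$-manifold along an unknot with odd framing produces a connected sum with $\sst$ (an even framing would yield $\sss$), we conclude $\partial_+W_2\cong X_K(\varphi)\cs\sst$. The main obstacle is the framing computation: one must verify that the $\pm m_c$ summand in $f_*(l_u)$ really contributes a nonzero mod-$2$ twist to the framing of the attaching circle, rather than being cancelled by the choice of Seifert disk or absorbed into the trivialization made in Lemma~\ref{L:2handle1}. This requires careful bookkeeping of the normal framings of both tori through the isotopy contracting $c+u$ onto the standard unknot in the $S^4$-summand, matching the meridian direction $m_c$ in $E(K)$ with the framing direction transverse to the Seifert disk.
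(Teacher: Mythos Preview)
Your approach is essentially the same as the paper's: locate a disk bounding the attaching circle and read off the framing from the gluing map $f$, specifically from $f_*(l_u)=\pm m_c+l_c$. The paper carries this out relative to the disk $D_0\subset X_K(\varphi)$ bounded by $c$ (which may meet $\Sigma_K(\varphi)$); since the meridian--longitude pair $(m_c,l_c)$ used in \eqref{f:gluing} is exactly the one determined by $D_0$, the identity $f_*(l_u)=\pm m_c+l_c$ immediately says the handle framing is $\pm1$ relative to $D_0$, hence odd. That is the whole argument.

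Two small points about your write-up. First, sliding $c+u$ across $D_u$ collapses the $u$--portion and leaves a circle isotopic to $c$, not to $u$; to land on $u$ you would slide across a disk bounded by $c$. This does not affect the conclusion (both $c$ and $u$ bound embedded disks in the simply-connected $4$--manifold), but it makes your reference to ``the Seifert disk described above'' ambiguous: the framing comparison you actually perform, $\pm m_c+l_c$ versus $l_c$, is relative to $D_0$, not to $D_u$. Second, the paper's deliberate choice of $D_0$ (rather than $D_u$) is what feeds directly into the later Lemmas~\ref{L:S2bundleS2} and~\ref{L:dual3handle}, where one must compare $D_0$ with a disk $D$ in $X-\Sigma_K(\varphi)$ via $w_2$; your $D_u$ route would require an extra step there.
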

\begin{proof}
Since $c$ is nullhomoptic in $X_{K}(\varphi)$, it bounds a disk $D_0$ in $X_K(\varphi)$ that may be assumed to be embedded in closed $4$-manifolds and intersect with the surface $\Sigma_K(\varphi)$. As described in~\secref{cobordism} about $W$, the 2-handle $H_{cu}^2=D^2\times D^3 = (I\times D^1)\times D^3$ is attached so that $\{\pm 1\}\times D^1\times 0$ are glued to the cores of the 1-handles $h_u^1$, $h_{c}^1$ in the tori $T_u$, $T_c$. And, we split the $D^3$ as $I\times D^2$ so that $\{\pm 1\}\times D^1\times I\times 0$ is glued to the $2$-dimensional $1$-handle $h^1$ of each torus $T_u$, $T_c$ and the normal $\{\pm 1\}\times D^1\times I \times D^2$ corresponds to the normal bundle of the torus restricted over the handle. So the gluing map $f_*(l_u)=\pm  m_{c}+l_{c}$ in~\eqref{f:gluing}, expressed with the meridian-longitude defined from the disk $D_0$, gives rise to the framing of attaching this $2$-handle which is same as the framing of the surgery along the curve $c$ on the boundary. It verifies that the framing relative to the disk $D_0$ is odd, and therefore the surgery gives the twisted $S^2$-bundle over $S^2$.

\end{proof}

Turning the cobordism $W$ upside down, denoted by $W^*$, yields a dual $2$-handle $H_{*}^2$ of the $3$-handle which is attached to a collar $\p_{+}\overline W\times I=\p_{-} W^*\times I\cong X_{K'}(\varphi)\times I$. Again this attaching will give a $S^2$-bundle over $S^2$ on $X_{K'}(\varphi)$ as follows.

\begin{lemma}\label{L:dualhandleattachingcircle}
The level of $W^*$ after adding a dual $2$-handle $H_{*}^2$ is diffeomorphic to $X_{K'}(\varphi)\cs \sst$. 
\end{lemma}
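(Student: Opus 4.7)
The plan is to mirror Lemma~\ref{L:2handlesurgeryframing}: first identify the attaching circle of the dual 2-handle $H^2_*$ in $X_{K'}(\varphi)$, then exhibit an embedded bounding disk for it, and finally show that the framing is odd, so that the 2-handle attachment produces $\cs\sst$.

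To locate the attaching circle, I would unwind the description of the 3-handle $H^3=(I\times D^2)\times D^2$ given in Section~\ref{S:cobordism}. Its cocore is $\{0\}\times\{0\}\times D^2$, and by the attaching recipe this disk sits in $\partial_+ W\cong X_{K'}(\varphi)$ as a normal disk of the torus $T$ formed by gluing $T_c$ to $T_u$ in the fiber sum. Consequently, the belt circle $\{0\}\times\{0\}\times\partial D^2$ — which is precisely the attaching circle of $H^2_*$ — is a meridian $\mu_T$ of $T$ inside $X_{K'}(\varphi)$. Because $T\subset X-\Sigma$ throughout the fiber-sum construction, the bounding normal disk lies in $\nu(T)\subset X_{K'}(\varphi)$ and is disjoint from $\Sigma_{K'}(\varphi)$; moreover it has trivial normal bundle since $T\cdot T=0$.

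For the framing of $H^2_*$ along $\mu_T$ relative to this disk, I would trace through the attaching data of $H^3$, which is governed by the gluing map $f$ of~\eqref{f:gluing}. The identity $f_*(l_u)=\pm m_c+l_c$ introduces a $\pm 1$ twist between the two longitudinal directions being identified, and by exactly the bookkeeping used in Lemma~\ref{L:2handlesurgeryframing} this twist propagates to an odd framing for the dual 2-handle $H^2_*$ relative to the normal disk of $T$. Attaching a 2-handle along a nullhomotopic curve with odd framing and trivial normal bundle in the simply-connected $4$-manifold $X_{K'}(\varphi)$ then yields $X_{K'}(\varphi)\cs\sst$, proving the lemma.

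The main obstacle I foresee is the framing computation itself: carefully propagating the $\pm 1$ twist in $f_*(l_u)$ through the attaching map of the $5$-dimensional $3$-handle $H^3$ to the framing of its dual. A useful consistency check is that, by turning $W$ upside down, the level of $W^*$ after $H^2_*$ must be diffeomorphic to $\partial_+ W_2\cong X_K(\varphi)\cs\sst$ from Lemma~\ref{L:2handlesurgeryframing}; applying the same argument with the roles of $K$ and $K'$ interchanged then identifies $X_K(\varphi)\cs\sst$ with $X_{K'}(\varphi)\cs\sst$, consistent with the claimed diffeomorphism type of $\partial_+ W^*_1$.
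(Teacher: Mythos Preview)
There is a genuine gap in your argument. You claim that the attaching circle of $H^2_*$ is the meridian of a torus $T$ ``formed by gluing $T_c$ to $T_u$'' and that it bounds a normal disk inside $\nu(T)\subset X_{K'}(\varphi)$. But the fiber sum $X_{K'}(\varphi)\cong (X_K(\varphi)-\nu(T_c))\cup_f(S^1\times S^3-\nu(T_u))$ contains \emph{neither} neighborhood: both $\nu(T_c)$ and $\nu(T_u)$ are excised before gluing, and the tori $T_c,T_u$ themselves are absent from $X_{K'}(\varphi)$. There is no torus $T$ in the fiber sum whose normal disk supplies the bounding disk you want. Consequently your framing computation, which rests on this non-existent normal disk and on the relation $f_*(l_u)=\pm m_c+l_c$ (the part of $f$ governing $H^2_{cu}$, not $H^2_*$), does not go through as written.

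The paper's route is different. Using $f_*(m_u)=m_c$ one identifies the attaching circle of $H^2_*$ with a meridian $m_c$ of $c$ in $S^1\times E(K)\subset X_K(\varphi)$. The diffeomorphism from the fiber sum to $X_{K'}(\varphi)$ is exactly the $\pm1$-Dehn surgery along $c$ (times $S^1$); under this blow-down the meridian $m_c$ becomes a curve $c'$ linking a crossing of $K'$ just as $c$ links the crossing of $K$, and the $0$-framing on $m_c$ becomes the $\pm1$-framing on $c'$. This both proves the lemma and, crucially, identifies the attaching circle as $c'$ with odd framing relative to the obvious disk $D_0$ meeting $\Sigma_{K'}(\varphi)$ --- information that is essential input for the relative framing arguments in Lemma~\ref{L:dual3handle} and Theorems~\ref{T:onecrosssdiffeorim} and~\ref{T:onecrosssdiffeoannuls}. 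Your consistency check at the end would require $X_K(\varphi)\cs\sst\cong X_{K'}(\varphi)\cs\sst$ as separate input, and even then would not yield this identification of the attaching circle.
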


\begin{proof}
The $3$-handle $H^3=D^3\times D^2=I\times D^2\times D^2$ is attached in the same way of the proof of~\lemref{2handlesurgeryframing} as the parts $\{\pm 1\}\times D^2\times 0$ of attaching sphere are glued to the $2$-handles $h^2_{T_u}$, $h^2_{T_c}$ of the tori $T_u$, $T_c$ respectively. And the normal disk of $T_u$ is a cocore of $H^3$ whose boundary will be the attaching circle of its dual $2$-handle $H_{*}^2$ glued in $\p\nu(T_c)$ according to $f$. So as given in~\eqref{f:gluing} $f_*(m_u)=m_c$, the $2$-handle $H_{*}^2$ is attached to a meridian circle to $c$ in $S^1\times E(K)\subset X_{K}(\varphi)$. Since the fiber sum of $X_{K}(\varphi)$ with $S^1\times S^3$ is diffeomorphic to $X_{K'}(\varphi)$, we need to see where the dual $2$-handle is attached in $X_{K'}(\varphi)$. It follows from that the diffeomorphism is done by $\pm 1$-Dehn twist along the curve $c$ in $E(K)$ trivially multiplied by $S^1$. Placing a meridian circle $m_c$ to $c$ in~\figref{Dehntwist}, then blowing down to arrive at the right hand side of that figure: in the process, the meridian circle becomes a circle $c'$ that links the crossing of $K'$ in the same way that $c$ links the crossing of $K$. Moreover, this picture verifies the fact that the framing on $c'$ should be odd, since the dual $2$-handle (corresponding to the meridian of $c$) will have framing $0$, which becomes $\pm1$ after blowing down. 
\end{proof}

In the next following sections, we will verify the rest process in the digram~\eqref{diagram} and prove the $1$-stable equivalence of the surfaces $\Sigma_{K}(\varphi)$, $\Sigma_{K'}(\varphi)$ according to each knotting construction of surfaces.
\section{$1$-stable equivalence of knotted surfaces}\label{S:proofThmABC}

\subsection{\bf Twist rim surgery}\label{S:stabtwistrimsurgeryonecrossinng}
Let $\pi_1(X-\Sigma)$ be any group $G$, and suppose that the surface $\Sigma\subset X$ carries a nontrivial homology class with $H_1(X-\Sigma)=\Z_d$. Then our key theorem is stated as follows:

\begin{theorem}\label{T:onecrosssdiffeotwist}
Suppose that two knots $K$, $K'$ in $S^3$ differ by a single crossing change. If $(X,\Sigma_{K}(m))$ is a pair produced by an $m$-twist rim surgery such that either $m=\pm 1$ or the meridian $\mu_{\Sigma}$ has order $d$ in $\pi_1(X-\Sigma)$ and $(m,d)=1$, then $(X\cs\sst,\Sigma_{K}(m))$ is pairwise diffeomorphic to $(X\cs\sst, \Sigma_{K'}(m))$.
\end{theorem}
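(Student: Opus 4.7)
The plan is to apply the relative cobordism construction of Section~\ref{S:basicconstruction} with the knot surgery specialized to $m$-twist rim surgery. By~\lemref{log} and the fiber-sum reformulation~\eqref{fibersum}, $(X, \Sigma_{K'}(m))$ is obtained from $(X, \Sigma_{K}(m))$ by a $\pm 1$-log transform along the torus $T_c = S^1 \times c$, where $c$ is the small unknot encircling the crossing of $K$ that is being changed. This produces a relative cobordism $W$ from $(X, \Sigma_{K}(m)) \sqcup (S^1 \times S^3, \emptyset)$ to $(X, \Sigma_{K'}(m))$ built from a single doubly round $1$-handle, decomposed as $H^1, H^2_{\gamma a}, H^2_{cu}, H^3$ as in diagram~\eqref{diagram}. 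The strategy is to show that the middle level $\p_+ W_2$ is pairwise diffeomorphic to $(X \cs \sst, \Sigma_{K}(m))$ and, after turning $W$ upside down, that the dual $2$-handle $H_*^2$ of $H^3$ attached to $(X, \Sigma_{K'}(m))$ likewise yields $(X \cs \sst, \Sigma_{K'}(m))$; combining these gives the desired pairwise diffeomorphism.

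The first task is to verify that every handle of $W$ and its dual can be attached in the surface complement. The attaching circles $\gamma + a$, $c + u$, and (dually) the curve $c'$ arising from the meridian of $c$ all lie on the tori $T_c, T_u$, which sit inside the $S^1 \times E(K)$-piece of the rim surgery and the added $S^1 \times S^3$, both disjoint from the surfaces. Then~\lemref{2handle1},~\lemref{2handlesurgeryframing}, and~\lemref{dualhandleattachingcircle} identify the ambient effect at each stage. To promote these ambient statements to pairwise diffeomorphisms, one must check that $c$ bounds an embedded disk in $X - \Sigma_{K}(m)$, and $c'$ bounds an embedded disk in $X - \Sigma_{K'}(m)$: with such disks in hand, the odd-framing computations of Section~\ref{S:basicconstruction} go through verbatim in the surface complement, yielding the $\sst$-summand identifications relative to the surfaces.

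The main obstacle, and the point at which the hypotheses on $m$ enter, is proving that $c$ is nullhomotopic in $X - \Sigma_{K}(m)$; the argument for $c'$ in $X - \Sigma_{K'}(m)$ is identical. Since $c$ encircles an oppositely oriented crossing of $K$, it bounds a disk in $S^3$ meeting $K$ in two points of opposite sign, so $[c]$ is represented by a commutator of meridians in $\pi_1(S^3 - K) = \pi_1(B^3 - K_+)$. Under either hypothesis of the theorem---$m = \pm 1$, or $(m,d) = 1$ together with $\mu_\Sigma$ of order $d$---the presentation~\eqref{mtwistfundgpeq2} derived in the proof of Proposition~\ref{P:twistrimfundagp}, together with the $m = \pm 1$ case from~\cite[Proposition 2.3]{kim-danny:symsurface}, shows that the image of $\pi_1(B^3 - K_+)$ in $\pi_1(X - \Sigma_{K}(m))$ is contained in the cyclic subgroup generated by $\mu_\Sigma = \mu_K$. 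Every commutator therefore projects to the identity, so $c$ indeed bounds a disk in $X - \Sigma_{K}(m)$. Combined with the framing analyses of Section~\ref{S:basicconstruction}, this upgrades the ambient identifications of $\p_+ W_2$ and the corresponding level of the dual cobordism $W^*$ to the relative identifications $(X \cs \sst, \Sigma_{K}(m))$ and $(X \cs \sst, \Sigma_{K'}(m))$, proving the theorem.
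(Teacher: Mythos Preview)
Your overall architecture matches the paper's: build the relative cobordism $W$, identify the middle level $\p_+W_2$, and compare it with the level obtained from the dual $2$-handle $H^2_*$ on $(X,\Sigma_{K'}(m))$. Your nullhomotopy argument for $c$ is also essentially the paper's Proposition~\ref{P:curveC}.

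The gap is in the sentence ``with such disks in hand, the odd-framing computations of Section~\ref{S:basicconstruction} go through verbatim in the surface complement.'' They do not. \lemref{2handlesurgeryframing} and \lemref{dualhandleattachingcircle} compute the surgery framing \emph{relative to the obvious disk $D_0$} that $c$ bounds inside $S^1\times_{\tau^m}(B^3,K_+)$; this $D_0$ meets $\Sigma_K(m)$ in two points, so it only tells you the ambient result $X\cs\sst$. To conclude that the summand is $(\sst,\emptyset)$ \emph{relative to the surface}, you must know the framing is odd with respect to a disk $D\subset X-\Sigma_K(m)$. The parity of the framing depends on the spanning disk: for two disks $D_0,D$ with common boundary $c$, the framings differ mod~$2$ by $\langle w_2(X),[D_0\cup -D]\rangle$, and $X$ is not assumed spin here. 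Merely knowing that some $D$ exists in the complement (which is all your commutator argument provides) is not enough; one needs control over the class $[D_0\cup -D]\in H_2(X)$.

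The paper closes this gap in \lemref{S2bundleS2} (and dually in \lemref{dual3handle}) by constructing a \emph{specific} $D$: first a punctured torus in $S^1\times_{\tau^m}(B^3-K_+)$ bounding $c$ via the relation $c=g^{-1}\tau_*^m(g)$, then capping off the $\delta$-loops with disks $D_2,-D_2$ in $X-\Sigma-S^1\times(B^3,I)$. With this choice, $[D_0\cup -D]=[D_0\cup -D_1]$ is represented by a torus inside $S^1\times_{\tau^m}(B^3,K_+)$, which has $H_2=0$; hence the class vanishes in $H_2(X)$ and $w_2(X)$ evaluates to zero on it. That is the missing step you need to supply before you can claim the relative identification $(X\cs\sst,\Sigma_K(m))$.
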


To prove Theorem~\ref{T:onecrosssdiffeotwist}, we now turn to the $2$-handle $H_{cu}^2$ and $3$-handle in the diagram~\eqref{diagram}.


\subsection*{\bf I. Attaching a $2$-handle $H_{cu}^2$ }\label{2handlesec2}

Lemma~\ref{L:2handle1} has shown that the upper boundary after adding a $2$-handle $H_{\gamma a}^2$ is diffeomorphic to $(X\cs S^4,\Sigma_K(m))$, and from~\lemref{2handlesurgeryframing} adding another $2$-handle $H_{cu}^2$ along the curve $c+u$ in $(X\cs S^4,\Sigma_K(m))$ gives $X\cs \sst$ for the ambient manifold. But since our stabilization is performed in the `outside' of the surface $\Sigma_K(m)$, we first show that $c$ is nullhomotopic in $X-\Sigma_{K}(m)$:

\begin{proposition}\label{P:curveC} 
Suppose that $\Sigma\subset X$ is a surface carrying $H_1(X-\Sigma)=\Z/d$. If either $m=\pm 1$ or the meridian $\mu_{\Sigma}$ has order $d$ in $\pi_1(X-\Sigma)$ and $(m,d)=1$, then the curve $c$ is nullhomotopic in $X-\Sigma_{K}(m)$. 
\end{proposition}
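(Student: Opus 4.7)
The plan is to read off the homotopy class of $c$ from the explicit presentation of $\pi_1(X - \Sigma_K(m))$ produced in Proposition~\ref{P:twistrimfundagp}, and show that the defining relations already force this class to be trivial.

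First I would locate $c$ inside the relevant piece. Choose the decomposition $(S^3,K) = (B^3, K_+)\cup(B^3,K_-)$ so that the crossing at which $K$ and $K'$ differ lies in the nontrivial half $B^3$. Then $c$ — the small unknotted circle around that crossing — sits in $B^3 - K_+$, and may be regarded as a loop in a single fibre of the mapping torus piece $S^1\times_{\tau^m}(B^3, K_+)$ of the decomposition~\eqref{twistrimsurgery2}. In particular $[c]$ lives in the free factor $\pi_1(B^3 - K_+)$ of the presentation~\eqref{mtwistfundgpeq2} of $\pi_1(X - \Sigma_K(m))$. Using a Wirtinger presentation local to the crossing and the fact that the disk bounded by $c$ in $S^3$ meets $K$ transversally in two points of \emph{opposite} sign (the very condition that makes $\pm1$-surgery on $c$ realize a crossing change), I would write
\[
[c] \;=\; \mu_a\,\mu_b^{-1} \;=\; \bigl(w_1\mu_K w_1^{-1}\bigr)\bigl(w_2\mu_K w_2^{-1}\bigr)^{-1}
\]
for some $w_1,w_2\in\pi_1(B^3-K_+)$, where $\mu_a,\mu_b$ are Wirtinger meridians of the two strands at the crossing.

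Next I would use the relations in~\eqref{mtwistfundgpeq2} to show that in $\pi_1(X-\Sigma_K(m))$ the element $\mu_K$ itself centralizes the image of $\pi_1(B^3 - K_+)$; once this is established the expression above collapses to $\mu_K\mu_K^{-1}=1$. The presentation already gives that $\mu_K^m$ is central in $\pi_1(B^3-K_+)$. If $m=\pm1$, there is nothing more to do. If instead $(m,d)=1$ and $\mu_\Sigma$ has order exactly $d$ in $\pi_1(X-\Sigma)$, then via the identification $\mu_K=\mu_\Sigma$ in~\eqref{mtwistfundgpeq2} together with the isomorphism $\pi_1(X-\Sigma_K(m))\cong\pi_1(X-\Sigma)$ furnished by Proposition~\ref{P:twistrimfundagp}, the element $\mu_K$ also has order $d$. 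A B\'ezout identity $sm+td=1$ then gives $\mu_K=\mu_K^{sm+td}=(\mu_K^m)^s$, exhibiting $\mu_K$ as a power of the central element $\mu_K^m$ and hence itself central in $\pi_1(B^3-K_+)$. In either case $w_1\mu_K w_1^{-1}=\mu_K=w_2\mu_K w_2^{-1}$, whence $[c]=1$.

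The main subtlety I expect is the Wirtinger step: one has to be sure that the two meridian contributions occur with opposite exponents, so that $[c]$ is of the form $\mu_a\mu_b^{-1}$ rather than $\mu_a\mu_b$. This is exactly the content of the opposite-sign-intersection hypothesis built into the standard "crossing change via $\pm1$-surgery on $c$" picture, and without it the same argument would leave a residual $\mu_K^2$ that is not killed by the relations in~\eqref{mtwistfundgpeq2}. Everything else is an algebraic manipulation inside a presentation we already have in hand.
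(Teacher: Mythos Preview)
Your proposal is correct and follows essentially the same line as the paper's proof. The paper writes the homotopy class of $c$ directly as a commutator $g^{-1}\mu_K^{-1}g\mu_K$ for some $g\in\pi_1(E(K))$, while you express it as $(w_1\mu_K w_1^{-1})(w_2\mu_K w_2^{-1})^{-1}$; these are equivalent descriptions of the same Wirtinger computation at an oppositely-oriented crossing, and both collapse immediately once one observes---exactly as you do via the B\'ezout step---that the relations in presentation~\eqref{mtwistfundgpeq2} force $\mu_K$ to centralize the image of $\pi_1(B^3-K_+)$.
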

\begin{proof}
Since $c$ is a curve at an oppositely oriented crossing of a knot $K$ as in~\figref{Dehntwist}, its homotopy class $c$ can be expressed as $g^{-1}\mu_K^{-1}g\mu_K$ in terms of some element $g\in \pi_1(E(K))$. It easily follows from the presentation~\eqref{mtwistfundgpeq2} for $\pi_1(X - \Sigma_{K}(m))$ in Proposition~\ref{P:twistrimfundagp} that $c=g^{-1}\mu_K^{-1}g\mu_K=1$ when either $m=\pm 1$ or $\mu_{\Sigma}^d=1$ and $(m,d)=1$.

\end{proof}

\begin{remark}\label{R:embedimmerseddisk}
Proposition~\ref{P:curveC} asserts that we can find a disk $D\subset X-\Sigma_{K}(m)$ such that $\p D=c$, and in general it will be an immersed disk. But since the circle $c$ lies the interior of the $4$-manifold $X-\Sigma_{K}(m)$, we simply pipe the self-intersection of $\text{int} (D)$ off of its boundary to make it an embedded disk. It follows that there is an induced diffeomorphism from any surgered manifold of $X$ along $c$ to the connected sum of $X$ with a $S^2$-bundle over $S^2$ i.e. $(X,\Sigma_{K}(m))\cs (\sss,\emptyset)$ or $(\sst,\emptyset)$. 
Note that $(X,\Sigma_{K}(m))\cs (\sss,\emptyset)$ is obtained from $X$ by surgery on $c$ with the framing determined by the unique normal framing of $D$, and $(X,\Sigma_{K}(m))\cs (\sst,\emptyset)$ is obtained with the other framing. To address the framing, we will explicitly find a disk and examine the framing of surgery relative to the disk. We will repeat this argument for the framing in~\lemref{S2bundleS2},~\ref{L:dual3handle} and other constructions~\thmref{onecrosssdiffeorim}, ~\ref{T:onecrosssdiffeoannuls}, and~\thmref{mainthmE}. Note that throughout this paper it may be assumed that the existing disk $D$ bounding the circle $c$ is an embedded disk by using the piping operation in the interior of the complement of surface $\Sigma_{K}(\phi)$. In fact, for our purpose this step is unnecessary in~\lemref{S2bundleS2},~\ref{L:dual3handle},~\thmref{onecrosssdiffeorim}, ~\ref{T:onecrosssdiffeoannuls} as we will see that the existence of immersed disk is sufficient, but it is not harmful to do it.

\end{remark}

\fig{130}{Figtorusc}{
\put(-250,70){\footnotesize$\color{blue}{\gamma}$}
\put(-110,90){\footnotesize$D_0$}
\put(-65,65){\footnotesize$\color{red}c$}
\caption{Torus $T_c=\gamma\times c\subset S^1\times_{\tau^m}(B^3,K_+)$}
\label{F:torusc}}

We first recall that twist rim surgery is performed along a rim torus in $\nu(\alpha)\cong S^1\times (B^3,I)$ so that it produces the mapping torus $S^1\times_{\tau^m} (B^3, K_+)$; see~\eqref{twistrimsurgery2}. So we view the torus $T_c=\gamma\times c=S^1\times c\subset S^1\times E(K)$ used in a log transform in~\lemref{log} is lying in the mapping torus $S^1\times_{\tau^m} (B^3, K_+)$; see~\figref{torusc}, and so the curve $c$ is in $S^1\times_{\tau^m} (B^3-K_+)$.

\begin{lemma}\label{L:S2bundleS2} 
The upper boundary of $2$-handlebody $W_2$ in the cobordism $W$ is diffeomorphic to $(X\cs\sst,\Sigma_{K}(m))$. 
\end{lemma}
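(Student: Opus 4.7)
My plan is to verify two separate things about the upper boundary $\partial_{+} W_2$: that the embedded surface $\Sigma_K(m)$ is left undisturbed, and that the ambient manifold is specifically $X\cs\sst$ rather than $X\cs\sss$. The first point should be essentially local. The attaching circle $c$ of $H^2_{cu}$ lives in $S^1\times E(K)\subset S^1\times_{\tau^m}(B^3,K_+)$, and in fact in the sub-piece $S^1\times_{\tau^m}(B^3-K_+)$, which is disjoint from $\Sigma_K(m)\cong S^1\times_{\tau^m}K_+$. Combined with Lemma~\ref{L:2handle1} (which placed $\Sigma_K(m)$ unchanged into $\partial_{+} W_1$), this will show that $\partial_{+} W_2$ has the form $(X',\Sigma_K(m))$, where $X'$ is obtained from $X\cong X\cs S^4$ by surgery on $c$ with some framing.

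Next I would identify $X'$. By Proposition~\ref{P:curveC} the curve $c$ is nullhomotopic in $X-\Sigma_K(m)$, and after the piping discussion in Remark~\ref{R:embedimmerseddisk} it bounds an embedded disk $D\subset X-\Sigma_K(m)$. Hence $X'$ is $X\cs\sss$ or $X\cs\sst$ depending on the parity of the framing of $H^2_{cu}$ measured against $D$. To read off this parity I would use the model disk $D_0$ indicated in Figure~\ref{F:torusc}: the obvious flat disk in $B^3$ bounded by $c$ at the crossing. Because the two strands at the crossing carry opposite orientations, $c$ has linking number zero with $K_+$, so the normal framing of $D_0$ agrees with the product longitude $l_c=c\times\mathrm{pt}$ used to set up the gluing map in Section~\ref{S:fibersum}. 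The piping operation that upgrades $D_0$ to an embedded disk $D$ in $X-\Sigma_K(m)$ (tubing off its two intersections with $\Sigma_K(m)$) consists only of normal pushes along $\Sigma_K(m)$, so it does not alter the framing induced on the boundary circle $c$.

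With $l_c$ now interpreted as the $D$-framing, I can simply read the attaching framing of $H^2_{cu}$ off the gluing identity $f_*(l_u)=\pm m_c + l_c$ recorded in~\eqref{f:gluing}: it is $\pm 1$ relative to $D$, hence odd. As noted in Remark~\ref{R:embedimmerseddisk} and already exploited for the ambient computation in Lemma~\ref{L:2handlesurgeryframing}, an odd-framed surgery on an unknotted circle in the complement of the surface yields a connected sum with $\sst$. Combined with the first paragraph, this will give the desired identification $\partial_{+} W_2\cong (X\cs\sst,\Sigma_K(m))$.

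The main obstacle I anticipate is purely bookkeeping on framings: one must verify carefully that neither the piping that produces $D$ from $D_0$, nor the translation between the longitude $l_c$ of $c$ defined in $E(K)$ and the normal framing of $D_0$ in $B^3$, changes the parity of the attaching framing. Once that is checked, the ambient framing computation of Lemma~\ref{L:2handlesurgeryframing} transfers without loss to the relative setting and finishes the lemma.
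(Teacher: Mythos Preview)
There is a genuine gap in the step where you pass from the model disk $D_0$ to a disk $D$ in $X-\Sigma_K(m)$. You write that ``the piping operation that upgrades $D_0$ to an embedded disk $D$ in $X-\Sigma_K(m)$ (tubing off its two intersections with $\Sigma_K(m)$)\ldots does not alter the framing.'' But tubing the two intersection points of $D_0$ with $\Sigma_K(m)$ along an arc in the surface does \emph{not} produce a disk: removing two open subdisks from $D_0$ gives a pair of pants ($\chi=-1$), and gluing in the meridional annulus over the arc ($\chi=0$) yields a surface with one boundary component and $\chi=-1$, i.e.\ a once-punctured torus. This is exactly why a crossing circle $c$ is typically \emph{not} nullhomotopic in $E(K)$ even though it has linking number zero with $K$. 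So your proposed $D$ is not a disk, and the piping of Remark~\ref{R:embedimmerseddisk} (which removes \emph{self}-intersections of an immersed disk, not intersections with $\Sigma_K(m)$) does not convert $D_0$ into the disk supplied by Proposition~\ref{P:curveC}. Consequently the framing comparison you need is not ``bookkeeping'': there is no evident geometric relation between $D_0$ and the abstract disk $D$ coming from Proposition~\ref{P:curveC}, and for a non-spin $X$ one cannot conclude a priori that $\langle w_2(X),[D_0\cup -D]\rangle=0$.

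The paper handles this by \emph{constructing} $D$ explicitly inside the mapping torus, using the relation $\delta^{-1}g\delta=\tau_*^m(g)$ together with the nullhomotopy of $\delta$ in $X-\Sigma_K(m)$, so that $D=D_1\cup D_2\cup(-D_2)$ with $D_1\subset S^1\times_{\tau^m}(B^3-K_+)$. The payoff is that $[D_0\cup -D]=[D_0\cup -D_1]$ is represented by a torus lying entirely in $S^1\times_{\tau^m}(B^3,K_+)$, whose second homology vanishes; hence the class is zero in $H_2(X)$ and $w_2$ vanishes on it, forcing the framing relative to $D$ to agree mod~$2$ with the odd framing relative to $D_0$. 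Your outline recovers Lemma~\ref{L:2handlesurgeryframing} correctly for the ambient manifold, but to finish the relative statement you need an argument of this kind rather than the tubing shortcut.
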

\begin{proof}
Attaching a $2$-handle $H_{cu}^2$ gives a surgery along $c$ in $(X,\Sigma_{K}(m))\cong (X\cs S^4,\Sigma_K(m))$. This curve $c$ obviously bounds an embedded disk $D_0$ in the component $S^1\times_{\tau^m}(B^3,K_+)$ of the decomposition~\eqref{twistrimsurgery2} for $(X,\Sigma_{K}(m))$ that intersects with $\Sigma_{K}(m)$ at two points as in~\figref{torusc}. Using the disk $D_0$, we denote by $\psi_0$ the surgery framing induced from attaching the $2$-handle $H_{cu}^2$. Note that there is a framing determined by the unique normal framing of $D_0$, but by~\lemref{2handlesurgeryframing}, our framing $\psi_0$ on $c=\p D_0$ relative to this disk is the other one so that it doesn't extend over $D_0$. To investigate the framing on $c$ in $X-\Sigma_{K}(m)$, we shall find a disk $D$ bounding $c$ in the complement of $\Sigma_{K}(m)$, and compare $\psi_0$ with the framing $\psi$ on $c$ determined by the disk $D$. This can be checked by computing $\la w_2(X), [D_0\cup-D]\ra$ since $\psi_0 \equiv \psi$ (mod $2$) $\Leftrightarrow \la w_2(X), [D_0\cup-D]\ra=0$. 

To find the disk $D$, we shall chase the homotopy class $c$ in the presentation for $\pi_1(X-\Sigma_{K}(m))$ given in Proposition~\ref{P:twistrimfundagp}. Referring to the decomposition~\eqref{cptwistrimsurgery2} for $X-\Sigma_{K}(m)$, we first claim that $c$ bounds a punctured torus in $S^1\times_{\tau^m}(B^3-K_+)$. The twist map $\tau$ along a meridian of $K$ gives a relation $\tau_{*}^m(g)=\mu_K^{-m}g\mu_K^m$ for all $g\in \pi_1(B^3-K_+)$, which is same as $\mu_K^{-1}g\mu_K$ under the assumption on $m$ in our theorem. And since the homotopy class $c$ is $g^{-1}\mu_K^{-1}g\mu_K$ for some $g\in \pi_1(B^3-K_+)$, it is same as $g^{-1}\tau_{*}^m(g)$, which obviously bounds a punctured torus in $S^1\times_{\tau^m}(B^3-K_+)$; see the first picture in~\figref{mappingtorus}.

\fig{150}{Figmappingtorus}{
\put(-260,150){\footnotesize${(B^3-K_+)}$}
\put(-30,110){\footnotesize$D_0$}
\put(-68,133){\footnotesize$\color{red}c$}
\put(-290,130){\footnotesize$\color{red}g$}
\put(-285,100){\footnotesize$\color{red}*$}
\put(-218,100){\footnotesize$\color{red}*$}
\put(-222,125){\footnotesize$\color{red}\tau_{*}^m(g)$}
\put(-70,55){\footnotesize$D_2$}
\put(-81,90){\footnotesize$-D_2$}
\put(-125,70){\footnotesize$D_1$}
\put(-343,10){\footnotesize$D_1$}
\put(-287,-5){\footnotesize$\delta$}
\put(-260,63){\footnotesize$\delta^{-1}$}
\caption{Disk $D$ in $X-\Sigma_{K}(m)$ with $\p D=c$}
\label{F:mappingtorus}}

Now consider a relation $\delta^{-1}g\delta=\tau_{*}^m(g)$ in $\pi_1(S^1\times_{\tau^m}(B^3-K_+))$ where $\delta$ denotes a generator $[S^1]$ of $\pi_1(S^1\times_{\tau^m}(B^3-K_+),*)$ in~\figref{fundamentalgp}, so a curve representing $\delta^{-1}g^{-1}\delta\tau_{*}^m(g)$ bounds a disk $D_1$ as in the first picture of~\figref{mappingtorus}. And, $\delta$ is nullhomotopic in $X-\Sigma_{K}(m)$ from the presentation~\eqref{mtwistfundgpeq1} in Proposition~\ref{P:twistrimfundagp}, so it bounds a disk $D_2$ in $X-\Sigma-S^1\times (B^3,I)$. Adding this relation to the presentation of $c$, we write $c=\delta^{-1} g^{-1}\delta\tau_{*}^m(g)$, which bounds a disk $D=D_1\cup D_2\cup -D_2$ in $X-\Sigma_{K}(m)$ as depicted in the second picture of~\figref{mappingtorus}. 

It remains to compare the framings $\psi_0$, $\psi$. Note that the element $ [D_0-D]\in H_2(X)$ is same as $[D_0-D_1]\in H_2(S^1 \times_{\tau^m}(B^3,K_+))$ represented by a torus; see the second picture in~\figref{mappingtorus}, which is trivial in $H_2(S^1 \times_{\tau^m}(B^3,K_+))=0$, so does in $H_2(X)$. This shows 
$w_2(X)$ vanishes on this class, from which our result follows.
\end{proof}

\subsection*{\bf II. Attaching a dual handle $H_{*}^2$ of $3$-handle $H^3$}\label{3handle} 
Turning $W$ upside down, the $3$-handle provides adding a $2$-handle $H_{*}^2$ to $\p_{+}\overline W=X\fcs{T_{c}}{T_u}S^1\times S^3\cong (X,\Sigma_{K'}(m))$ in our relative cobordism. As it turns out in~\lemref{dualhandleattachingcircle}, a key point is that the dual handle $H_{*}^2$ is attached to a curve $c'$ at a crossing of $K'$ and its framing, disregarding surface knot $\Sigma_{K'}(m)$ in $X$, is shown to be twisted. But since we're building a relative cobordism from the top, we will find a disk $D$ in $X-\Sigma_{K'}(m)$ bounding $c'$, and then examine the surgery framing relative to this disk $D$. The idea is same as before, so it is basically to find a dual sphere of the attaching sphere of $H^3$ that doesn't intersect with $\Sigma_{K'}(m)$ and determine its framing.

\begin{lemma}\label{L:dual3handle} 
The upper boundary of $\p_{-} W^*\times I\cup H^2_*$ is diffeomorphic to $(X\cs\sst,\Sigma_{K'}(m))$. 
\end{lemma}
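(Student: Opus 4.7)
The plan is to mirror the proof of Lemma~\ref{L:S2bundleS2}, but now working from the top of $W$ (equivalently, from the bottom of $W^*$). First I would observe that, by Lemma~\ref{L:dualhandleattachingcircle}, the dual $2$-handle $H^2_*$ is attached along a curve $c'$ in $X \cong X_{K'}(\varphi)$ that links an oppositely oriented crossing of $K'$ in exactly the same way that $c$ links a crossing of $K$, and that the framing of $H^2_*$ is odd relative to an obvious disk $D_0'$ in $S^1 \times_{\tau^m}(B^3,K'_+)$ that meets $\Sigma_{K'}(m)$ in two points (the disk analogous to the one in \figref{torusc}).

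Next I would show that $c'$ is null-homotopic in $X-\Sigma_{K'}(m)$. Because the log transform of Lemma~\ref{L:log} preserves the fundamental group of the complement, the group $\pi_1(X-\Sigma_{K'}(m))$ admits the same description as in Proposition~\ref{P:twistrimfundagp}, only with $K'$ in place of $K$. The curve $c'$ represents a commutator $g^{-1}\mu_{K'}^{-1}g\mu_{K'}$ for some $g\in\pi_1(B^3-K'_+)$, so the hypothesis on $m$ (either $m=\pm 1$, or $(m,d)=1$ with $\mu_{\Sigma}$ of order $d$) forces $c'=1$ by the same argument as in Proposition~\ref{P:curveC}. Piping away any self-intersections in the interior as in Remark~\ref{R:embedimmerseddisk} yields an embedded disk $D'$ in $X-\Sigma_{K'}(m)$ with $\partial D'=c'$. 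Attaching $H^2_*$ therefore gives a connected sum with a fiber bundle over $S^2$ on the ambient manifold, and the question is only which bundle.

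The core of the argument is the framing comparison. Using exactly the construction of \figref{mappingtorus} applied to $\Sigma_{K'}(m)$ and $K'_+$, I would build $D'$ from a punctured torus in $S^1\times_{\tau^m}(B^3-K'_+)$ bounded by $g^{-1}\tau_*^m(g)$ together with two parallel copies (with opposite orientation) of the nullhomotopy in $X-\Sigma-S^1\times(B^3,I)$ of the generator $\delta=[S^1]$. The class $[D_0'-D']\in H_2(X)$ is then represented by a torus that is already null in $H_2(S^1\times_{\tau^m}(B^3,K'_+))=0$, so $\langle w_2(X),[D_0'-D']\rangle = 0$ and the framings $\psi_0',\,\psi'$ agree modulo $2$. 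Since Lemma~\ref{L:dualhandleattachingcircle} gives $\psi_0'$ odd relative to $D_0'$, the framing $\psi'$ is odd relative to $D'$, so the surgery yields $(X\cs \sst, \Sigma_{K'}(m))$.

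The main obstacle, and the reason this is not a verbatim rerun of Lemma~\ref{L:S2bundleS2}, is keeping straight which ambient $4$-manifold one is working in at each stage: the surgered pair appears as a fiber sum $(X,\Sigma_{K}(m))\fcs{T_c}{T_u}S^1\times S^3$, and one must transport the explicit bounding-disk construction across this fiber sum/log-transform identification so that $c'$ really bounds the disk described in \figref{mappingtorus} inside $X-\Sigma_{K'}(m)$. Once that identification is made carefully, the $w_2$ vanishing and the framing comparison go through as above.
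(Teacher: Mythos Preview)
Your proposal is correct and follows essentially the same route as the paper's proof: invoke Lemma~\ref{L:dualhandleattachingcircle} to place $c'$ at a crossing of $K'$ with odd framing relative to the obvious disk $D_0'$, use Proposition~\ref{P:curveC} (for $K'$) to see $c'$ is null-homotopic in $X-\Sigma_{K'}(m)$, and then repeat the explicit disk construction and $w_2$ comparison from Lemma~\ref{L:S2bundleS2} verbatim with $K'$ in place of $K$. The paper's own proof is in fact terser than yours---it simply says ``with the exactly same argument in Lemma~\ref{L:S2bundleS2}''---and does not dwell on the fiber-sum identification you flag as the main obstacle, since once Lemma~\ref{L:dualhandleattachingcircle} places $c'$ inside $S^1\times E(K')\subset S^1\times_{\tau^m}(B^3,K'_+)$ the construction of $D'$ proceeds entirely within $(X,\Sigma_{K'}(m))$.
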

\begin{proof} 
Since $H_{*}^2$ is attached along a curve $c'$ at a crossing of $K'$ which is in $S^1\times E(K')\subset (X,\Sigma_{K'}(m))$ as shown in~\lemref{dualhandleattachingcircle}, the curve $c'$ bounds a disk $D_0$ in $S^1\times_{\tau^m} (B^3,K'_+)$ intersecting with $\Sigma_{K'}(m)$ at two points in the same way that $c$ does in~\figref{torusc}. And the surgery framing coming from adding the $2$-handle on $c'$ relative to $D_0$ does not extend over the full disk $D_0$.  Proposition~\ref{P:curveC} shows under our current assumption on $m$ that $c'$ is nullhomotopic in $X-\Sigma_{K'}(m)$, so this surgery gives $(\sss,\emptyset)$ or $(\sss,\emptyset)$ on $(X, \Sigma_{K'}(m))$. The hypothesis on $m$ in $\Sigma_{K'}(m)$ allows one to find a disk $D$ in $X-\Sigma_{K'}(m)$ with the exactly same argument in~\lemref{S2bundleS2} and show that the framing on $c'$ relative to $D$ is equivalent to the one relative to $D_0$ up to (mod $2$). 
\end{proof}

\lemref{S2bundleS2} and~\ref{L:dual3handle} show that $(X\cs\sst,\Sigma_{K}(m))\cong \p_{+} W_{2}=\p_{+}(\p_{-} W^*\times I\cup H^2_*)\cong (X\cs\sst,\Sigma_{K'}(m))$, so it completes the proof of \thmref{onecrosssdiffeotwist}.

\subsection{\bf Rim surgery}\label{S:stabrimsurgeryonecrossing}
Finshel-Stern's rim surgery is the case $m=0$ of $m$-twist rim surgery. Let $\Sigma$ be an embedded surface in a simply-connected $4$-manifold $X$ with $\pi_1(X-\Sigma)=1$. As discussed in~\secref{twistedrimsurgery}, the rim surgery performed in a neighborhood $\nu(\alpha)\cong S^1\times (B^3,I)$ of a curve $\alpha\subset \Sigma$ produces $S^1\times (B^3,K_+)$ i.e. $m=0$ in~\eqref{twistrimsurgery2}.


\begin{theorem}\label{T:onecrosssdiffeorim}
Suppose that two knots $K$, $K'$ in $S^3$ differ by a single crossing change. If $\Sigma_{K}(\varphi)$ and $\Sigma_{K'}(\varphi)$ are surface knots obtained by rim surgery then $(X\cs\sst,\Sigma_{K}(\varphi))$ is pairwise diffeomorphic to $(X\cs\sst,\Sigma_{K'}(\varphi))$.
\end{theorem}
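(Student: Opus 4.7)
Rim surgery is the $m=0$ case of $m$-twist rim surgery, and under the standing hypothesis $\pi_1(X-\Sigma)=1$ of rim surgery we have $d=1$ in the convention $H_1(X-\Sigma)=\Z_d$. The numerical hypothesis of Theorem~\ref{T:onecrosssdiffeotwist}, namely $(m,d)=1$ with $\mu_\Sigma$ of order $d$, is then automatically satisfied for $m=0$, so the statement is formally a corollary of Theorem~\ref{T:onecrosssdiffeotwist}. Nonetheless the argument simplifies considerably, so I will outline a direct proof.

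The plan is to use the cobordism machinery of Section~\ref{S:basicconstruction}: build $W$ from $(X,\Sigma_K(\varphi))\sqcup S^1\times S^3$ with the doubly-round $1$-handle, realizing diagram~\eqref{diagram}. Lemma~\ref{L:2handle1} gives $\p_+W_1\cup H_{\gamma a}^2\cong(X\cs S^4,\Sigma_K(\varphi))$, and Lemma~\ref{L:2handlesurgeryframing} shows that attaching $H_{cu}^2$ stabilizes the ambient manifold by $\sst$. The substantive content is upgrading this to a pair identification $\p_+W_2\cong(X\cs\sst,\Sigma_K(\varphi))$, and symmetrically $\p_+(\p_-W^*\times I\cup H_*^2)\cong(X\cs\sst,\Sigma_{K'}(\varphi))$ for the dual $2$-handle of the $3$-handle.

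For this main step (the rim-surgery analogue of Lemma~\ref{L:S2bundleS2}), since $\pi_1(X-\Sigma_K(\varphi))=1$ by the fundamental-group invariance of rim surgery, the curve $c$ is nullhomotopic in $X-\Sigma_K(\varphi)$ and bounds an embedded disk $D$ by Remark~\ref{R:embedimmerseddisk}. The framing of $H_{cu}^2$ must then be checked to agree with the normal framing of $D$ modulo $2$, equivalently $\la w_2(X),[D_0\cup-D]\ra=0$ for the obvious disk $D_0\subset S^1\times(B^3,K_+)$ bounded by $c$. I would construct $D$ following the template of Lemma~\ref{L:S2bundleS2}: the class $c=g^{-1}\mu_K^{-1}g\mu_K$ is a commutator in $\pi_1(B^3-K_+)$ bounding a surface $D_1$ in $S^1\times(B^3-K_+)$; the $S^1$-loop $\delta$ is nullhomotopic in $X-\Sigma-S^1\times(B^3,I)$ by simple-connectivity and bounds a disk $D_2$ there; and $D:=D_1\cup D_2\cup -D_2$ gives an embedded disk bounded by $c$. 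The key containment is that $D_0\cup -D$ lies in $S^1\times B^3\subset X$, where $H_2=0$, so $[D_0\cup -D]=0$ in $H_2(X)$ and the $w_2$-pairing vanishes. The dual $2$-handle $H_*^2$ is attached along an analogous curve $c'$ in $S^1\times E(K')$ and is handled by the symmetric argument, completing the pair identification at the middle level of $W$.

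The principal obstacle I foresee is the construction of $D_1$ when $m=0$: in the product $S^1\times(B^3,K_+)$ the twist relation $\delta^{-1}g\delta=\tau_*^0(g)=g$ is trivially a commutator, so the natural $2$-chain realizing $c$ is a surface of positive genus rather than a disk. To obtain an embedded disk one must cap off the extra handles using the simple-connectivity of $X-\Sigma$, in particular that $\mu_K=\mu_\Sigma$ is nullhomotopic, while ensuring that the resulting $2$-cycle $D_0\cup -D$ still lies in a region with vanishing $H_2$. Once this is in hand, splicing the middle-level pair identifications through $W$ yields the pairwise diffeomorphism $(X\cs\sst,\Sigma_K(\varphi))\cong(X\cs\sst,\Sigma_{K'}(\varphi))$.
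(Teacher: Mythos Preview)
Your overall strategy and your final-paragraph fix are correct, but the paper avoids the detour you take through the template of Lemma~\ref{L:S2bundleS2}. Rather than trying to adapt the mapping-torus disk $D_1$ (which, as you rightly note, degenerates for $m=0$ because $\tau_*^0(g)=g$ so that $c$ cannot be rewritten as $g^{-1}\tau_*^m(g)$), the paper works directly in $E(K)$: the crossing curve $c=g^{-1}\mu_K^{-1}g\mu_K$ already bounds an explicit embedded punctured torus $T_*\subset E(K)$ with generators $g$ and $\mu_K$. Since $\pi_1(X-\Sigma_K(\varphi))=1$, the entire image of $\pi_1(S^1\times E(K))$ is trivial, so in particular $g$ bounds a disk $D_2$ in $X-\Sigma-\nu(R_\alpha)$; cutting $T_*$ along $g$ and filling with $D_2\cup -D_2$ gives the disk $D$. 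The homology computation is then immediate because the two oppositely oriented copies of $D_2$ cancel, leaving $[D_0\cup -D]=[D_0\cup T_*]$, a torus class supported in $S^1\times B^3$ where $H_2=0$. Your suggestion to cap off $\mu_K$ instead of $g$ would work equally well; the point is that the $\delta$-loop plays no role in the rim-surgery case and need not enter the construction at all.

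One caution about your opening remark: while the hypotheses of Theorem~\ref{T:onecrosssdiffeotwist} are formally satisfied by $(m,d)=(0,1)$, the proof of that theorem rests on Lemma~\ref{L:S2bundleS2}, whose geometric construction of the punctured torus via the twist relation is vacuous when $m=0$. So the rim-surgery case is not literally a corollary of the twist-rim statement as proved, and a separate (in fact simpler) argument is genuinely needed---which is exactly what the paper supplies and what your final paragraph is groping toward.
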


\begin{proof}
Lemma~\ref{L:2handle1} asserts that the level of $W$ after adding a $1$-handle $H^1$ and a $2$-handle $H^2_{\gamma a}$ is $(X\cs S^4,\Sigma_{K}(\varphi))$. At the stage of adding the next $2$-handle $H_{cu}^2$ along $c+u$, the curve $c$ is nullhomotopic in $X-\Sigma_{K}(\varphi)$ because $\pi_1(X-\Sigma_{K}(\varphi))=1$, so the surgery provided from the $2$-handle gives a connected sum of $(\sss,\emptyset)$ or $(\sst,\emptyset)$ on the exterior of the surface. We now need to handle with the framing issue. 

\fig{150}{Figcrosstorus}{
\put(-100,5){\footnotesize$\color{blue}g$}
\put(-160,10){\footnotesize$\color{red}c$}
\put(-60,20){\footnotesize$\mu_{K}$}
\caption{Punctured torus $T_*$ in the exterior $E(K)$ of the knot $K$}
\label{F:crosstorus}}

Since $c$ bounds a disk $D_0$ in $S^1\times (B^3,K_+)\subset (X,\Sigma_{K}(\varphi))$ intersecting at two points with $\Sigma_{K}(\varphi)$ as in~\figref{torusc}, this disk specifies the framing $\psi_0$ on the curve $c$ induced from the $2$-handle $H_{cu}^2$, which doesn't extend over $D_0$ by~\lemref{2handlesurgeryframing}. To find a disk $D$ with $\p D=c$ in $X-\Sigma_{K}(\varphi)$, note that $c$ is at a crossing of the knot $K$, so it bounds an obvious punctured torus $T_*$ in $E(K)$ consisting of two generators; a meridian $\mu_K$ of $K$ and some $g\in\pi_1(E(K))$ represented by a blue curve as in~\figref{crosstorus}. Since $\pi_1(X-\Sigma_{K}(\varphi))$ is trivial, the image of $\pi_1(S^1\times E(K))$ is trivial so that the curve $g$ bounds a disk $D_2$ in $X-\Sigma-\nu(R_{\alpha})$ where $R_{\alpha}$ denotes the rim torus given by a curve $\alpha\subset\Sigma$. Cutting $T_*$ along $g$ and filling with two oppositely oriented disks $D_2\cup-D_2$ gives a disk $D$ bounding $c$ in $X-\Sigma_{K}(\varphi)$.

If $\psi$ denotes the framing on $c$ relative to the disk $D$, it readily follows that $\psi$ is equivalent to $\psi_0$ (mod $2$) by showing $\la w_2(X), [D_0\cup-D]\ra=0$. This is because $[D_0\cup-D]=[D_0\cup T_*]$ is represented by a torus in $S^1\times (B^3, K_{+})$, which vanishes in $H_2(S^1\times (B^3, K_{+}))=0$, so does in $H_2(X)$. Thus, the level $\p_{+} W_2$ is diffeomorphic to $(X\cs\sst,\Sigma_{K}(\varphi))$. 

Now turn $W$ upside down and note that as shown in Lemma~\ref{L:dual3handle}, the dual $2$-handle $H^*_2$ of the $3$-handle is attached to the curve $c'$ in $S^1\times E(K')$ which is again nullhomotopic in $X-\Sigma_{K'}(\varphi)$ since $\pi_1(X-\Sigma_{K'}(\varphi))=1$, and so the dual $2$-handle gives $\sss$ or $\sst$ on the exterior of $\Sigma_{K'}(\varphi)$ in $X$. Repeating the same argument in the above with $c'$ in $(X,\Sigma_{K'}(\varphi))$, one can show that the framing of the surgery induced by the $2$-handle $H^*_2$ is twisted and so it proves our theorem.

\end{proof}

\subsection{\bf Annulus rim surgery}\label{S:stabannulusrimsurgeryonecrossing}

Our setting is given as in~\secref{annulusrimsurgery}, and recall that the Finashin's construction is a knot surgery along a torus in a neighborhood $\nu(M)\cong S^1\times (B^3,f)$ to produce $S^1\times (B^3,f_K)=S^1\times (B^3, f)-S^1\times (m_b\times D^2)\cup_{\phi} S^1\times E(K)$ with the gluing $[S^1]\mapsto [S^1]$, $m_b\mapsto \mu_K$, and $\mu_T\mapsto \lambda_K$.
Furthermore, it is not hard to see that $\pi_1(X-\Sigma_K(\phi))$ is preserved when $\pi_1(X-\Sigma)=\Z_d$ by applying the Van Kampen theorem for the decomposition~\eqref{finashin} of $X-\Sigma_K(\phi)$. In this computation, we see that the generators $[S^1]$, $\mu_K$ of $ \pi_1(S^1\times E(K))$ are trivial in $\pi_1(X-\Sigma_K(\phi))$, so the image of $\pi_1(S^1\times E(K))$ is a trivial subgroup of $\pi_1(X-\Sigma_K(\phi))$; see~\cite{finashin},~\cite[Proposition 3.3]{kim-danny:topotriviality} for more details. In this circumstance, the same argument in the rim surgery case works here. 
 
\begin{theorem}\label{T:onecrosssdiffeoannuls}
Suppose that two knots $K$, $K'$ in $S^3$ differ by a single crossing change. If $\Sigma_K(\phi)$ and $\Sigma_{K'}(\phi)$ are surface knots obtained by annulus rim surgery then $(X,\Sigma_K(\phi))\cs(\sst,\emptyset)$ is pairwise diffeomorphic to $(X,\Sigma_{K'}(\phi))\cs(\sst,\emptyset)$.
\end{theorem}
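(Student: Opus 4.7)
The plan is to adapt the cobordism argument of Theorem~\ref{T:onecrosssdiffeorim} to the annulus rim surgery setting. The key ingredient carries over directly from the discussion just before the theorem statement: when $\pi_1(X-\Sigma)=\Z_d$, the image of $\pi_1(S^1\times E(K))$ in $\pi_1(X-\Sigma_K(\phi))$ is the trivial subgroup. This triviality is exactly the property that made the rim surgery argument work, and it is even cleaner here than in the twist rim case since no residual meridian generator survives. I would build the same cobordism $W$ from $(X,\Sigma_K(\phi))\sqcup S^1\times S^3$ as in Section~\ref{S:cobordism}, attach the $1$-handle $H^1$ and $2$-handle $H^2_{\gamma a}$, and invoke Lemma~\ref{L:2handle1} to identify the level after these two handles with $(X\cs S^4,\Sigma_K(\phi))$.

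Next I would attach the $2$-handle $H^2_{cu}$ along $c+u$. Since $c\subset S^1\times E(K)$ and the image of $\pi_1(S^1\times E(K))$ in $\pi_1(X-\Sigma_K(\phi))$ is trivial, the curve $c$ is nullhomotopic in the complement of $\Sigma_K(\phi)$, and by Remark~\ref{R:embedimmerseddisk} bounds an embedded disk there. By Lemma~\ref{L:2handlesurgeryframing} the ambient surgery yields $X\cs\sst$; what remains is the relative framing check. The plan mirrors Theorem~\ref{T:onecrosssdiffeorim}: take the obvious disk $D_0\subset S^1\times(B^3,f_K)$ bounding $c$ (meeting $\Sigma_K(\phi)$ transversely and carrying the non-extending framing $\psi_0$), then construct a complementary disk $D\subset X-\Sigma_K(\phi)$ from the punctured torus $T_*\subset E(K)$ at the crossing, capping its two remaining boundary circles (copies of $\mu_K$ and some $g\in\pi_1(E(K))$) by disks supplied from the triviality of the $\pi_1$-image. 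Then $[D_0\cup -D]=[D_0\cup T_*]$ is represented by a torus inside $S^1\times(B^3,f_K)$ with vanishing class in $H_2(S^1\times(B^3,f_K))=0$, hence in $H_2(X)$, giving $\langle w_2(X),[D_0\cup -D]\rangle=0$ and $\psi\equiv\psi_0\pmod 2$. This identifies $\partial_+W_2$ with $(X\cs\sst,\Sigma_K(\phi))$.

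Turning $W$ upside down, Lemma~\ref{L:dualhandleattachingcircle} attaches the dual $2$-handle $H^2_*$ of the $3$-handle to a curve $c'$ at the corresponding crossing of $K'$ inside $S^1\times E(K')\subset(X,\Sigma_{K'}(\phi))$, with an ambient-twisted framing. Since annulus rim surgery by $K'$ also preserves $\pi_1(X-\Sigma)=\Z_d$ with the same triviality of the $\pi_1(S^1\times E(K'))$-image, $c'$ is nullhomotopic in $X-\Sigma_{K'}(\phi)$, and the exact same disk construction (with $K$ replaced by $K'$) produces a complementary disk against which the surgery framing is again twisted modulo $2$. Hence $\partial_+(\partial_-W^*\times I\cup H^2_*)$ is $(X\cs\sst,\Sigma_{K'}(\phi))$, and the identification $\partial_+W_2\cong\partial_+(\partial_-W^*\times I\cup H^2_*)$ finishes the proof. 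The only place that requires any care is the framing comparison, but this reduces to the same $w_2$ computation on a torus lying in an aspherical $S^1\times B^3$ region, so I expect no new obstacles relative to Theorem~\ref{T:onecrosssdiffeorim}.
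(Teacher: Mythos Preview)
Your proposal is correct and follows essentially the same route as the paper's proof: invoke Lemma~\ref{L:2handle1}, use the triviality of the image of $\pi_1(S^1\times E(K))$ to show $c$ is nullhomotopic in the complement, build $D$ from the punctured torus $T_*\subset E(K)$ as in Theorem~\ref{T:onecrosssdiffeorim}, compare framings via $[D_0\cup T_*]\in H_2(S^1\times(B^3,f_K))=0$, and repeat for the dual $2$-handle on the $K'$ side. The only cosmetic differences are that the paper notes $D_0$ meets $\Sigma_K(\phi)$ in four points (two arcs in $f_K$) rather than two, and that one surgers a single generator $g$ of $T_*$ (capping with $D_2\cup -D_2$) rather than both $\mu_K$ and $g$; neither affects the argument.
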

\begin{proof}
We just begin with the $2$-handle $H_{cu}^2$ attached along $c+u$ in $(X\cs S^4,\Sigma_{K}(\varphi))$ from Lemma~\ref{L:2handle1}. Since $c$ is a curve at a crossing of $K$ and the annulus rim surgery is performed on a neighborhood $\nu(M)$, the curve $c$ lies in the resulting manifold $S^1\times (B^3,f_K)=S^1\times (B^3, f)-S^1\times (m_b\times D^2)\cup_{\phi} S^1\times E(K)$. And, it is nullhomotopic in $X-\Sigma_K(\phi)$ since the image $\pi_1(S^1\times E(K))$ is trivial in $\pi_1(X-\Sigma_K(\phi))$ as shown in~\cite{finashin},~\cite[Proposition 3.3]{kim-danny:topotriviality}. So we sketch the exactly same argument in~\thmref{onecrosssdiffeorim}.

There exists an embedded disk $D_0$ bounding $c$ in $S^1\times (B^3,f_K)$ that intersects with $\Sigma_K(\phi)$ at `four points', and the surgery framing relative to the disk $D_0$, coming from the $2$-handle $H_{cu}^2$, doesn't extend over $D_0$ by~\lemref{2handlesurgeryframing}. Since $c$ bounds a punctured torus $T_*$ in $E(K)$ and the image $\pi_1(S^1\times E(K))$ is trivial in $\pi_1(X-\Sigma_{K'}(\phi))$, the argument in~\thmref{onecrosssdiffeorim} gives a way to find another disk $D$ in $X-\Sigma_K(\phi)$ bounding $c$. It readily follows that the framing on $c$ relative to $D$ is also twisted since the homology class $[D_0\cup-D]$ is represented by a torus $[D_0\cup T_*]$ in $H_2(S^1\times (B^3, f_{K}))=0$. So $w_2(X)$ vanishes on this class, from which we have $\p_{+} W_{2}\cong (X\cs\sst,\Sigma_{K}(\varphi))$.

Finally, turn $W$ upside down. By~\lemref{dualhandleattachingcircle}, the dual $2$-handle $H^*_2$ of the $3$-handle is attached along a curve $c'$ in $S^1\times E(K')$, which has a trivial $\pi_1$ in $\pi_1(X-\Sigma_{K'}(\phi))$ so that the attaching circle $c'$ is nullhomotopic in $X-\Sigma_{K'}(\phi)$. One simply proceeds the above argument to show that the boundary $\p_{+}(\p_{-} W^*\times I\cup H^2_*)$ is diffeomorphic to $(X\cs\sst,\Sigma_{K'}(\phi))$, and hence our result follows. 

\end{proof}

\begin{remark}
A crucial point of our argument for the case of rim surgery and annulus rim surgery is that the image $\pi_1(S^1\times E(K))$ is trivial in $\pi_1(X-\Sigma_{K'}(\phi))$. This allows us to exhibit a disk bounding $c$ easily as we first find a punctured torus $T_*$ with $\p T_*=c$ in $E(K)$ and surger out one of two generators of $T_*$ using a disk bounding the circle. But the difference in twist rim surgery is that one cannot proceed this argument, and so we enlarge $S^1\times E(K)$ to the surgered manifold $S^1 \times_{\tau^m}(B^3,K_+)$ of a neighborhood of a curve on $\Sigma$ and use the fact that the image of $\pi_1(S^1 \times_{\tau^m}(B^3-K_+))$ in $\pi_1(X - \Sigma_{K}(m))$ is a cyclic subgroup generated by the meridian of $\Sigma_{K}(m)$ under our hypothesis in Proposition~\ref{P:curveC}. 
\end{remark}
\vskip5pt

\subsection{\bf Stabilization for Rim surgery, Annulus rim surgery, and Twist rim surgery; Proofs of Theorem A, B, and C}\label{S:stabknotsurgery}

\begin{proof}[Proof of \thmref{mainthmA},~\ref{T:mainthmB}, and~\ref{T:mainthmC}]

Suppose that $(X,\Sigma_{K}(\varphi))$ is a pair constructed from rim surgery, twisted rim surgery, and annulus rim surgery on $(X,\Sigma)$, and assume that it preserves its surface knot group under the given hypothesis of Theorems~\ref{T:mainthmA},~\ref{T:mainthmB},~\ref{T:mainthmC}.
For any knot $K$ in $S^3$, there is a sequence of knots $K_1=K$, $K_2$,...., $K_n$, with the unknot $K_n$, by crossing changes i.e. $\pm 1$-Dehn surgery along disjoint $n$-curves $\{c_i\}_{i=1,..n}$ in $E(K)$. So, for each $i$ the pair $(X,\Sigma_{K_{i+1}}(\phi))$ is obtained by a ($\pm 1$)-log transform along a torus $T_{c_i}$ in $(X, \Sigma_{K_{i}}(\phi))$. And at each stage, the surface knot group $\pi_1(X-\Sigma_{K_{i}}(\phi))$ is preserved for each knotting construction so that Theorem~\ref{T:onecrosssdiffeotwist},~\ref{T:onecrosssdiffeorim}, and~\ref{T:onecrosssdiffeoannuls} assert that $\Sigma_{K_{i}}(\phi)$ is equivalent to $\Sigma_{K_{i+1}}(\phi)$ in $X\cs\sst$, and hence we deduce that $(X\cs\sst, \Sigma_{K}(\phi))$ is pairwise diffeomorphic to $(X\cs\sst, \Sigma_{K_{n}}(\phi))$ where $K_n$ is unknot. 

In~\cite[Lemma 2.2]{kim-danny:topotriviality}, it is shown that for the unknot $K_n$, any knot surgery on $X-\nu(\Sigma)$ along a torus $T\subset X-\nu(\Sigma)$ and a gluing $\phi$ with $\phi(\mu_T)=\lambda_{K_n}$ gives a diffeomorphism $(X-\nu(\Sigma))_{K_n}\to X-\nu(\Sigma)$ that is the identity on the boundary. Thus, $(X, \Sigma_{K_{n}}(\phi))\cong (X, \Sigma)$ so that this proves our main theorems. 
\end{proof}


\section{ Stabilization for Cyclic knot surgery}\label{S:stabcylicknotsurgery}

\begin{proof}[Proof of~\thmref{mainthmE}]
We shall follow the standard argument for the $1$-stable equivalence shown in the previous knotting constructions, so the first step is to show that for any two knots $K$ and $K'$ related by a single crossing change, the cyclic knot surgered pairs $(X_{K}(\varphi), \Sigma_{K}(\varphi))$ and $(X_{K'}(\varphi), \Sigma_{K'}(\varphi))$ become pairwise diffeomorphic after connected summing with $(\sst,\emptyset)$. 

We work with the cobordism $W$ constructed in~\secref{cobordism}, and the level of $W$ after adding a $1$-handle $H^1$ and a $2$-handle $H_{\gamma a}^2$ was shown in~\lemref{2handle1} to be diffeomorphic to $(X_{K}(\varphi)\cs S^4,\Sigma_K(\varphi))$. The rest argument about the $2$-handle $H_{cu}$ and $3$-handle $H^3$ is the following.

\lemref{2handlesurgeryframing} shows that attaching the $2$-handle $H_{cu}$ along a curve $c$ in $(X_{K}(\varphi),\Sigma_K(\varphi))$ gives rise to $X_{K}(\varphi)\cs\sst$ for the ambient manifold. This means that its framing on $c$ is determined by an embedded disk $D_0$ in $X_K(\varphi)$ that may intersect with $\Sigma_K(\varphi)$ and the framing does not extend over $D_0$. To consider the framing in the complement of surface knot, we note that $c$ is also nullhomotopic in $X_{K}(\varphi)-\Sigma_{K}(\phi)$ because the element $c$ is $g^{-1}\mu_K^{-1}g\mu_K$ for some $g\in \pi_1(E(K))$ and $\pi_1(X_{K}(\varphi)-\Sigma_{K}(\phi))$ is cyclic. So there is a disk $D$ spanning $c$ in $X_{K}(\varphi)-\Sigma_{K}(\phi)$ that may assume to be embedded as discussed in Remark~\ref{R:embedimmerseddisk}. The framing of surgery along $c$ relative to $D$ can be compared with the one relative to the disk $D_0$ by evaluating $w_2(X_{K}(\varphi))$ on the class $[D_0\cup-D]$, which is zero because $X_{K}(\varphi)$ is spin. Thus, the level $\p_+W_2$ is diffeomorphic to the pair $(X_{K}(\phi)\cs\sst,\Sigma_{K}(\phi))$. 

Turning the $3$-handle upside down, it was shown in~\lemref{dualhandleattachingcircle} that its dual $2$-handle gives a surgery along $c'$ on $(X_{K'}(\phi),\Sigma_{K'}(\phi))$, where the curve $c'$ is at an oppositely oriented crossing of $K'$. Again since $\pi_1(X_{K'}(\phi)-\Sigma_{K'}(\phi))$ is cyclic, $c'$ is nullhomotopic in $X_{K'}(\phi)-\Sigma_{K'}(\phi)$ so that the surgery from the dual $2$-handle yields a connected sum of $(\sss,\emptyset)$ or $(\sst,\emptyset)$ on the boundary.
In the middle level of $W$ between $2$-handles and a $3$-handle, we will have a pairwise diffeomorphism $(X_{K}(\phi)\cs\sst,\Sigma_{K}(\phi))\to (X_{K'}(\phi)\cs\sss,\Sigma_{K'}(\phi))$ or $(X_{K'}(\phi)\cs\sst,\Sigma_{K'}(\phi))$. But it must be the pair $(X_{K'}(\phi)\cs\sst,\Sigma_{K'}(\phi))$ because both ambient manifolds $X_{K}(\phi)$ and $X_{K'}(\phi)$ are spin. 

For the rest argument, the proof in~\secref{stabknotsurgery} applies for the case of cyclic knot surgery with no extra effort. 

\end{proof}

\subsection*{Acknowledgements}
The author would like to thank the American Institute of Mathematics (AIM) for its support. The author is also grateful to Danny Ruberman for helpful comments and to the referee for correcting some errors in the first draft and wonderful suggestions.


\def\cprime{$'$}
\providecommand{\bysame}{\leavevmode\hbox to3em{\hrulefill}\thinspace}
\providecommand{\MRhref}[2]{%
  \href{http://www.ams.org/mathscinet-getitem?mr=#1}{#2}
}
\providecommand{\href}[2]{#2}

\end{document}